\newtheorem{theorem}{Theorem}[section]
\newtheorem{lemma}[theorem]{Lemma}
\newtheorem{proposition}[theorem]{Proposition}
\newtheorem{corollary}[theorem]{Corollary}
\newtheorem{example}[theorem]{Example}
\newcommand{\SSS}{{\mathfrak S}}
\newcommand{\wt}{\operatorname{wt}}
\newcommand{\Wt}{\operatorname{Wt}}
\newcommand{\pair}[2]{\left(#1,#2\right)}
\newcommand{\av}{{\bf a}}
\newcommand{\bv}{{\bf b}}
\newcommand{\ab}{\av\bv}
\newcommand{\nint}[1]{\left\lfloor #1 \right\rceil}
\renewcommand{\qedsymbol}{
{\nobreak \ifvmode \relax \else
 \ifdim\lastskip<1.5em \hskip-\lastskip
 \hskip1.5em plus0em minus0.5em \fi \nobreak
 \vrule height0.75em width0.5em depth0.25em\fi}
}
\begin{document}

\author{Richard Ehrenborg and JiYoon Jung}
\title{Descent pattern avoidance}
\date{}

\maketitle

\begin{abstract}
We extend the notion of consecutive pattern
avoidance to considering
sums over all permutations where each term
is a product of weights depending on each consecutive pattern
of a fixed length.
We study the problem of finding the asymptotics of these sums.
Our technique is to extend the spectral method
of Ehrenborg, Kitaev and Perry.
When the weight depends on the descent pattern
we show how to find the equation
determining the spectrum.
We give two length $4$ applications.
First, we find the asymptotics of the number of permutations
with no triple ascents and no triple descents.
Second, we give the asymptotics of the number of permutations
with no isolated ascents or descents.
Our next result is a weighted pattern of length $3$
where the associated operator only has one non-zero
eigenvalue. Using generating functions we show
that the error term in the asymptotic expression
is the smallest possible.
\end{abstract}

\section{Introduction}

Ehrenborg, Kitaev and Perry~\cite{Ehrenborg_Kitaev_Perry}
used the spectrum of linear operators on the space $L^{2}([0,1]^{m})$
to study the asymptotics of consecutive pattern avoidance.
We extend their techniques to study
asymptotics of sums over all permutations
where each term is a product of weights which depend on
the consecutive patterns of a fixed length $m+1$.
When the weights are all zero or one, this reduces to
studying consecutive pattern avoidance.
Furthermore, when the weights depend on
the descent pattern, we show how
to obtain the equation whose roots are the spectrum
of the associated linear operator.
In general this is a
transcendental equation.

We give two length $4$ examples. First we study
the number of permutations with no triple ascents and
no triple descents. This is equivalent to
$\{1234,4321\}$-avoiding permutations.
We determine the transcendental eigenvalue equation
and a numerical approximation to the largest root, 
which gives the asymptotics of the number such permutations.

The second example is
permutations that avoid the ten alternating patterns
$1324$, $1423$, $2314$, $2413$, $3412$ and
$2143$, $3142$, $3241$, $4132$, $4231$.
This is the class of permutations with no isolated ascents or descents.
Yet again, we obtain
the transcendental eigenvalue equation satisfied by
the spectrum and give numerical approximation to its largest root.

We next turn to a weighted length $3$ example.
We are interested in the sum over all $123$-avoiding permutations
where the term is $2$ to the power of the number
of double descents.
Here we also consider the extra conditions if the permutation
begins/ends with an ascent or a descent.
The associated operator only has one non-zero eigenvalue,
namely $1$. Hence the asymptotics is a constant $c$ times $n$ factorial
and the error term is bounded by $n! \cdot r^{n}$
where~$r$ is an arbitrary small positive number.

It remains to understand the error term. We are able to find
the associated generating functions. Furthermore, we show that
the error term is the smallest
possible! The asymptotics is $c \cdot n!$
(where the constant $c$ is irrational, in fact, transcendental)
and the explicit expression is the nearest integer to $c \cdot n!$
for large enough $n$.
This behavior also occurs with the derangement numbers.
This classical
sequence makes its appearance as one of the sequences that we study.

We end the paper with concluding remarks and open problems.

\section{Weighted consecutive pattern avoidance}

For $x_{1}, x_{2}, \ldots, x_{k}$
distinct real values, define
$\Pi(x_{1}, x_{2}, \ldots, x_{k})$ to be the unique
permutation $\sigma$ in the symmetric group $\SSS_{k}$ such that
$x_{i} < x_{j}$ if and only if $\sigma_{i} < \sigma_{j}$
for all indices $1 \leq i < j \leq k$.
We say that a permutation $\pi$ in $\SSS_{n}$
consecutively avoid
a permutation $\sigma$ in $\SSS_{m}$ if there
is no index $i$ such that
$\Pi(\pi_{i}, \pi_{i+1}, \ldots, \pi_{i+m-1}) = \sigma$.

Let $\wt$ be
a real-valued weight function on the symmetric group
$\SSS_{m+1}$.
Similarly, let $\wt_{1}, \wt_{2}$ be
two real-valued weight functions on the symmetric group
$\SSS_{m}$.
We call $\wt_{1}$ and $\wt_{2}$ the initial,
respectively, the final weight function.
We extend these three weight functions to
the symmetric group $\SSS_{n}$ for $n \geq m$ by
defining
\begin{eqnarray*}
     \Wt(\pi)
  & = &
       \wt_{1}(\Pi(\pi_{1}, \pi_{2}, \ldots, \pi_{m})) \\
  &   &
             \cdot
     \prod_{i=1}^{n-m}
       \wt(\Pi(\pi_{i}, \pi_{i+1}, \ldots, \pi_{i+m})) \\
  &   &
             \cdot
       \wt_{2}(\Pi(\pi_{n-m+1}, \pi_{n-m+2}, \ldots, \pi_{n})) .
\end{eqnarray*}
In other words, the weight of a permutation $\pi$
in $\SSS_{n}$
is the product of the initial weight function $\wt_{1}$
applied to the $m$ first entries of $\pi$ with
the product of the weight function $\wt$ applied to
every segment of $\pi$ of length $m+1$
with the final weight function $\wt_{2}$
applied to the $m$ last entries of $\pi$.
The question is what can one say about the quantity
$$ \alpha_{n}
       =
   \sum_{\pi \in \SSS_{n}} \Wt(\pi)  . $$

Consecutive pattern avoidance can be studied this way
by using the weight functions
$\wt_{1}(\sigma) = \wt_{2}(\sigma) = 1$ 
for all $\sigma$ in $\SSS_{m}$ and
$\wt(\sigma) = 1$ if $\sigma \not\in S$ and
$\wt(\sigma) = 0$ otherwise,
where $S \subseteq \SSS_{m+1}$
is the set of forbidden patterns.
Observe then that a permutation $\pi \in \SSS_{n}$ avoids the 
patterns in~$S$ if and only if $\Wt(\pi) = 1$.
Note that by letting
the initial weight function $\wt_{1}$ and
the final weight function $\wt_{2}$ be $0,1$-functions,
we are studying
consecutive pattern avoidance with forbidden initial and
final configurations.

The methods of Ehrenborg, Kitaev and Perry~\cite{Ehrenborg_Kitaev_Perry}
to study the asymptotics of
consecutive pattern avoidance by considering the spectrum
of operators on $L^{2}([0,1]^{m})$ naturally extend to
this more general setting of weights on permutations.

Define the function $\chi$ on
the $(m+1)$-dimensional unit cube $[0,1]^{m+1}$ by
$\chi(x)  =  \wt(\Pi(x))$.
Note that $\chi$ is
undefined on a point with two equal coordinates.
However, this situation occurs on a set
of measure zero and hence can be ignored.
Next define the operator $T$ on 
the space $L^{2}([0,1]^{m})$ by
\begin{equation}
    T(f(x_{1},\ldots,x_{m}))
  =
    \int_{0}^{1}
        \chi(t,x_{1},\ldots,x_{m})
      \cdot
        f(t,x_{1},\ldots,x_{m-1}) \: dt  .
\label{equation_T}
\end{equation}
Note that $L^{2}([0,1]^{m})$ is a Hilbert space
with the inner product defined by
$$    \pair{f}{g}
   =
      \int_{[0,1]^{m}} 
          f(x_{1}, \ldots, x_{m})
        \cdot
          \overline{g(x_{1}, \ldots, x_{m})}
             \:
        dx_{1} \cdots dx_{m}  .  $$
The adjoint operator $T^{\ast}$ is defined by the relation
$\pair{f}{T^{\ast}(g)} = \pair{T(f)}{g}$. For the operator
$T$ defined in equation~(\ref{equation_T}) we have that
$$
T^{\ast}(f(x_{1},\ldots,x_{m}))
    =
\int_{0}^{1} \chi(x_{1},\ldots,x_{m},u) \cdot f(x_{2},\ldots,x_{m},u) \: du
$$
Finally, the spectrum of an operator $T$
is all the values $\lambda$ such that
$T - \lambda \cdot I$ is not an invertible operator.

Similarly to the function $\chi$,
define
the two functions $\kappa$ and $\mu$ on
the $m$-dimensional unit cube $[0,1]^{m}$ by
$\kappa(x)  =  \wt_{1}(\Pi(x))$
and
$\mu(x)  =  \wt_{2}(\Pi(x))$.

Generalizing the main result in~\cite{Ehrenborg_Kitaev_Perry},
we have the following theorem.
\begin{theorem}
The non-zero spectrum of the associated operator $T$
consists of discrete eigenvalues of
finite multiplicity which may accumulate only at $0$.
Furthermore, let~$r$ be a positive real number 
such that there is no eigenvalue of $T$ with modulus~$r$
and let
$\lambda_{1}, \ldots, \lambda_{k}$
be the eigenvalues of~$T$ greater in modulus than $r$.
Assume that $\lambda_{1}, \ldots, \lambda_{k}$
are simple eigenvalues with associated eigenfunctions~$\varphi_{i}$
and that the adjoint operator $T^{*}$ has
eigenfunctions $\psi_{i}$ corresponding
the eigenvalues~$\lambda_{i}$.
Then we have the expansion
\begin{equation}
    \alpha_{n}/n!
    =
\pair{T^{n-m}(\kappa)}{\mu}
    =
\sum_{i=1}^{k} 
     \frac{\pair{\varphi_{i}}{\mu} \cdot
           \pair{\kappa}{\overline{\psi_{i}}}}
          {\pair{\varphi_{i}}{\overline{\psi_{i}}}} 
     \cdot \lambda_{i}^{n-m}
     +
O(r^{n})   .
\end{equation}
\label{theorem_expansion}
\end{theorem}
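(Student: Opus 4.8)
The plan is to mimic, in the weighted setting, the argument of Ehrenborg, Kitaev and Perry, checking that the only feature of the weights that enters is their boundedness (each of $\wt,\wt_{1},\wt_{2}$ takes finitely many values, so $\chi,\kappa,\mu$ are bounded and in particular $\kappa,\mu\in L^{2}([0,1]^{m})$ and $T$ is bounded with $\|T\|\le\max_{\sigma}|\wt(\sigma)|$).

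\textbf{Step 1: reduction to an analytic quantity.} I would first show $\alpha_{n}/n!=\pair{T^{n-m}(\kappa)}{\mu}$. Modelling a uniform random permutation of $\SSS_{n}$ by independent uniform variables $U_{1},\dots,U_{n}$ on $[0,1]$, one has $\Pi(\pi_{i},\dots,\pi_{i+k})=\Pi(U_{i},\dots,U_{i+k})$ almost surely, so
\begin{equation*}
   \alpha_{n}/n!
   =
   \int_{[0,1]^{n}}
      \kappa(u_{1},\dots,u_{m})
      \cdot
      \prod_{i=1}^{n-m}\chi(u_{i},\dots,u_{i+m})
      \cdot
      \mu(u_{n-m+1},\dots,u_{n})
   \: du_{1}\cdots du_{n}.
\end{equation*}
Only the factors $\kappa(u_{1},\dots,u_{m})$ and $\chi(u_{1},\dots,u_{m+1})$ involve $u_{1}$, and by the definition~(\ref{equation_T}) of $T$ the integral of their product over $u_{1}$ equals $T(\kappa)$ evaluated at $(u_{2},\dots,u_{m+1})$. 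After relabelling $v_{j}=u_{j+1}$, the remaining integrand has exactly the same shape with $\kappa$ replaced by $T(\kappa)$; iterating $n-m$ times and using that $\mu$ is real-valued gives $\alpha_{n}/n!=\pair{T^{n-m}(\kappa)}{\mu}$. This is a routine induction on $n$.

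\textbf{Step 2: spectral structure of $T$.} Although $T$ itself is not compact when $m\ge 2$ (the last coordinate is only ``filled in'' by $\chi$, so $T$ behaves partly like a shift), unwinding the index shifts shows that $T^{m}$ is an honest integral operator on $L^{2}([0,1]^{m})$ whose kernel is a product of $m$ translated copies of $\chi$, hence a bounded function on $[0,1]^{2m}$. Therefore $T^{m}$ is Hilbert--Schmidt, in particular compact, so $T$ is power-compact and the Riesz--Schauder theory applies to it: $\sigma(T)\setminus\{0\}$ consists of eigenvalues of finite algebraic multiplicity whose only possible accumulation point is $0$. Indeed, a nonzero $\lambda\in\sigma(T)$ forces $\lambda^{m}$ to be a nonzero eigenvalue of the compact operator $T^{m}$, and the generalized $\lambda$-eigenspace of $T$ embeds in the finite-dimensional generalized $\lambda^{m}$-eigenspace of $T^{m}$; an accumulation of eigenvalues of $T$ away from $0$ would produce one for $T^{m}$. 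This proves the first assertion.

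\textbf{Step 3: the asymptotic expansion.} Fix $r>0$ avoiding the moduli of all eigenvalues. Since eigenvalues accumulate only at $0$, only finitely many — $\lambda_{1},\dots,\lambda_{k}$ — have modulus greater than $r$, and the rest of $\sigma(T)$ lies strictly inside the disc of radius $r$. Let $P=\frac{1}{2\pi i}\oint_{|z|=r}(zI-T)^{-1}\,dz$ be the associated Riesz projection; simplicity of the $\lambda_{i}$ gives $P=P_{1}+\cdots+P_{k}$ with each $P_{i}$ a rank-one projection onto $\mathbb{C}\varphi_{i}$ commuting with $T$, and $T^{n-m}P_{i}=\lambda_{i}^{n-m}P_{i}$. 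On the range of $I-P$ the operator $T$ has spectral radius strictly less than $r$, so $\|T^{n-m}(I-P)\|=O(\rho^{\,n})$ for some $\rho<r$, which is $O(r^{n})$. Splitting $T^{n-m}(\kappa)=\sum_{i}T^{n-m}P_{i}(\kappa)+T^{n-m}(I-P)(\kappa)$ and pairing with $\mu$ yields the stated expansion, the coefficient of $\lambda_{i}^{n-m}$ being $\pair{P_{i}(\kappa)}{\mu}$; writing the rank-one projection as $P_{i}(f)=\pair{f}{\overline{\psi_{i}}}\cdot\varphi_{i}\big/\pair{\varphi_{i}}{\overline{\psi_{i}}}$ with $\psi_{i}$ the adjoint eigenfunction biorthogonal to the $\varphi_{j}$ rewrites this coefficient as $\pair{\varphi_{i}}{\mu}\cdot\pair{\kappa}{\overline{\psi_{i}}}\big/\pair{\varphi_{i}}{\overline{\psi_{i}}}$, as claimed.

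\textbf{Where the difficulty lies.} Conceptually nothing beyond~\cite{Ehrenborg_Kitaev_Perry} is needed, since the compactness of $T^{m}$ and the Riesz-projection estimate depend only on boundedness of the weights; the real work is bookkeeping. The delicate point is matching the abstract rank-one spectral projection $P_{i}$ with the explicit quotient of inner products, keeping the complex conjugations consistent among $\varphi_{i}$, $\psi_{i}$, and the real-valued functions $\kappa$, $\mu$, and making sure the remainder is genuinely $O(r^{n})$ — for which the accumulation-only-at-$0$ property is exactly what pushes the residual spectrum strictly inside the disc of radius $r$.
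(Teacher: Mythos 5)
Your proposal is correct and follows essentially the same route as the paper, which itself omits the proof and defers to Section~2.2 of Ehrenborg--Kitaev--Perry: the probabilistic identity $\alpha_{n}/n!=\pair{T^{n-m}(\kappa)}{\mu}$, power-compactness of $T$ via the Hilbert--Schmidt kernel of $T^{m}$, and the Riesz-projection splitting are exactly the ingredients of that argument, and your observation that only boundedness of the weights is used is precisely why the paper can omit the proof in the weighted setting.
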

The proof is the same as
in~\cite[Section~2.2]{Ehrenborg_Kitaev_Perry}
and hence omitted.

Theorem~\ref{theorem_expansion} requires us to
determine both the eigenfunction $\varphi$
and the adjoint eigenfunction~$\psi$
for each eigenvalue in order to compute the constant in each
term. However, when the weight function has symmetry
in the sense described below
then the adjoint eigenfunction can be determined from the eigenfunction.

Let $J$ be the involution on $L^{2}([0,1]^{m})$
given by
$J(f(x_{1}, x_{2}, \ldots, x_{m}))
   =
f(1-x_{m}, \ldots, 1-x_{2}, 1-x_{1})$.
Note that $J$ is a self-adjoint operator on $L^{2}([0,1]^{m})$,
that is,
$\pair{J f}{g} = \pair{f}{J g}$.
Similar to~\cite[Lemma~4.7]{Ehrenborg_Kitaev_Perry}
we have that
\begin{lemma}
Assume that the weight function $\wt$ is real-valued and satisfies
the symmetry
$$   \wt(\sigma)
   =
     \wt(m+2-\sigma_{m+1}, m+2-\sigma_{m}, \ldots, m+2-\sigma_{1})
     $$
for all $\sigma \in \SSS_{m+1}$.
If $\varphi$ is an eigenfunction of the operator $T$
with eigenvalue $\lambda$ then $\psi = J\varphi$ is an eigenfunction
of the adjoint $T^{*}$ with the eigenvalue~$\lambda$.
Furthermore, we have the equality
$\pair{f}{\overline{\psi}} = \pair{\varphi}{J f}$
for a real-valued function $f$.
\label{lemma_J}
\end{lemma}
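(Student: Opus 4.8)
The plan is to verify the three assertions in turn, relating the operators $T$, $T^{*}$, and $J$ through the symmetry hypothesis on $\wt$. First I would translate the combinatorial symmetry on $\wt$ into a pointwise statement about the kernel function $\chi$. Since $\chi(x) = \wt(\Pi(x))$ and the map $x \mapsto (1-x_{m+1}, \ldots, 1-x_{1})$ reverses the order of coordinates and reverses each comparison, one checks that $\Pi(1-x_{m+1},\ldots,1-x_{1})$ is precisely the permutation $(m+2-\sigma_{m+1}, \ldots, m+2-\sigma_{1})$ when $\Pi(x_{1},\ldots,x_{m+1}) = \sigma$. Hence the symmetry hypothesis becomes the identity $\chi(x_{1},\ldots,x_{m+1}) = \chi(1-x_{m+1},\ldots,1-x_{1})$, valid off the measure-zero bad set.

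Next I would compute $T^{*} J$ and $J T$ directly from the integral formulas and show they agree. Starting from $J\varphi(x_{1},\ldots,x_{m}) = \varphi(1-x_{m},\ldots,1-x_{1})$, apply $T^{*}$ using its explicit form $T^{*}(g)(x) = \int_{0}^{1} \chi(x_{1},\ldots,x_{m},u)\, g(x_{2},\ldots,x_{m},u)\,du$; then substitute $u = 1-t$, use the kernel identity from the first step to rewrite $\chi(x_{1},\ldots,x_{m},1-t)$ as $\chi(t,1-x_{m},\ldots,1-x_{1})$, and recognize the result as $(J T \varphi)(x_{1},\ldots,x_{m})$. This gives the operator identity $T^{*} J = J T$. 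Applying it to an eigenfunction $\varphi$ with $T\varphi = \lambda\varphi$ yields $T^{*}(J\varphi) = J(T\varphi) = J(\lambda\varphi) = \lambda \cdot J\varphi$, since $\lambda$ is real (as $\wt$ is real-valued, $\chi$ is real, so the spectrum relevant here behaves accordingly — more precisely $J$ commutes with scalar multiplication by a real number), establishing that $\psi = J\varphi$ is an eigenfunction of $T^{*}$ with eigenvalue $\lambda$.

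Finally, for the inner product identity, I would use that $J$ is self-adjoint and an involution together with the fact that $J$ commutes with complex conjugation (replacing $f$ by $1-x$ in the argument does not interact with conjugation of values). For real-valued $f$ we get $\pair{f}{\overline{\psi}} = \pair{f}{\overline{J\varphi}} = \pair{f}{J\overline{\varphi}} = \pair{Jf}{\overline{\varphi}} = \overline{\pair{\overline{\varphi}}{Jf}} = \pair{\varphi}{Jf}$, where the last step uses that $Jf$ is real-valued and the conjugate-symmetry of the inner product; one should be slightly careful with the placement of conjugates, but this is routine.

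The main obstacle is the bookkeeping in the second step: correctly tracking the reversal of coordinates under $J$ against the ``shift'' structure built into $T$ (which drops the last variable) versus $T^{*}$ (which drops the first), and confirming that the substitution $u = 1-t$ aligns these two shifts so that the kernel identity applies with exactly the right arguments. Everything else is a direct consequence of $J$ being a self-adjoint involution commuting with conjugation.
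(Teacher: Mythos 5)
Your proposal is correct and follows essentially the same route as the paper, which defers the main computation to Lemma~4.7 of Ehrenborg--Kitaev--Perry (translating the symmetry of $\wt$ into the kernel identity $\chi(x_{1},\ldots,x_{m+1}) = \chi(1-x_{m+1},\ldots,1-x_{1})$ and deducing $T^{*}J = JT$ by the substitution $u = 1-t$) and only spells out the inner-product chain $\pair{f}{\overline{J\varphi}} = \pair{f}{J\overline{\varphi}} = \pair{Jf}{\overline{\varphi}} = \pair{\varphi}{\overline{Jf}} = \pair{\varphi}{Jf}$, exactly as you do. Your parenthetical worry about $\lambda$ being real is unnecessary, since $J$ is linear (not conjugate-linear) and so commutes with multiplication by any complex scalar, but this does not affect the argument.
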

To prove Lemma~\ref{lemma_J},
the only part that differs from the proof
in~\cite[Lemma~4.7]{Ehrenborg_Kitaev_Perry}
is the line
$\pair{f}{\overline{\psi}}
   =
 \pair{f}{\overline{J \varphi}}
   =
 \pair{f}{J \overline{\varphi}}
   =
 \pair{J f}{\overline{\varphi}}
   =
 \pair{\varphi}{\overline{J f}}
   =
 \pair{\varphi}{J f}$.

\section{Weighted descent pattern avoidance}

We now introduce weighted descent pattern avoidance
and the connection with consecutive pattern avoidance.
For a permutation
$\pi = \pi_{1} \pi_{2} \cdots \pi_{n} \in \SSS_{n}$
define its {\em descent word}
(see for
instance~\cite{Ehrenborg_Levin_Readdy,Stanley_EC_I_second})
to be
$u(\pi) = u_{1} u_{2} \cdots u_{n-1}$
where $u_{i} = \av$ if $\pi_{i} < \pi_{i+1}$
and $u_{i} = \bv$ if $\pi_{i} > \pi_{i+1}$,
that is, an $\av$ at position $i$ encodes that
$\pi$ has an ascent at position $i$ and a $\bv$
encodes a descent.

Let $\wt$ be a weight function on $\ab$-words
of length $m$, that is, the set $\{\av,\bv\}^{m}$.
Similarly, let
$\wt_{1}$ and $\wt_{2}$ be weight functions on $\ab$-words
of length $m-1$.
We extend this weight function to
words of length $n$ greater than $m-1$ by
letting
$$  \Wt(v_{1} \cdots v_{n})
   =
    \wt_{1}(v_{1} \cdots v_{m-1})
      \cdot
    \prod_{i=1}^{n-m+1}
        \wt(v_{i} \cdots v_{i+m-1})  
      \cdot
    \wt_{2}(v_{n-m+2} \cdots v_{n})   .  $$
Finally, we extend the weight to permutations
by letting
$\Wt(\pi) = \Wt(u(\pi))$.

Recall that the word $x$ has the word $w$ as a {\em factor}
if we can write $x = v \cdot w \cdot z$, where $v$ and~$z$
are also words and the dot denotes concatenation.
Let $U$ be a collection of $\ab$-words of length~$m$,
that is, $U$ is a subset of $\{\av,\bv\}^{m}$.
Define
$S(U)$ by
$$   S(U)
   =
     \{ \sigma \in \SSS_{m+1} \:\: : \:\: u(\sigma) \in U \} . $$
It is clear that a permutation $\pi$ that avoids
the descent patterns in $U$ is equivalent to that the
permutation avoids the consecutive patterns in $S(U)$.
Hence descent pattern avoidance is a special case of
consecutive pattern avoidance.

A few examples are in order. 
\begin{example}
{\rm
$m=1$ and $U = \{\bv\}$. There is only one permutation
without any descents, namely $12 \cdots n$, and hence
$\alpha_{n} = 1$.
}
\end{example}

\begin{example}
{\rm
$m=2$ and $U = \{\av\bv\}$. This forces the permutation
to have no peaks. Hence
$\alpha_{n} = 2^{n-1}$ for $n \geq 1$.
}
\label{example_ba}
\end{example}

\begin{example}
{\rm
$m=2$ and $U = \{\av\av,\bv\bv\}$. This forces the permutation
to be alternating. Alternating permutations are enumerated
by the Euler numbers, that is,
$\alpha_{n} = 2 \cdot E_{n}$ for $n \geq 2$
and
$\alpha_{n} = 1$ for $n \leq 1$.
See for instance~\cite[Section~1.6.1]{Stanley_EC_I_second}
or
\cite[Example~1.11]{Ehrenborg_Kitaev_Perry}.
}
\end{example}

For an $\ab$-word $u$ of length $m-1$ define the descent polytope $P_{u}$ to
be the subset of the unit cube $[0,1]^{m}$ corresponding to all vectors with
descent word $u$, that is,
$$
     P_{u}
  =
     \{(x_{1}, \ldots, x_{m}) \in [0,1]^{m}
          \:\: : \:\:
        x_{i} \leq x_{i+1} \mbox{ if } u_{i} = \av
          \mbox{ and }
        x_{i} \geq x_{i+1} \mbox{ if } u_{i} = \bv \} . $$
Observe that the unit cube $[0,1]^{m}$ is the union
of the $2^{m-1}$ descent polytopes.
Similar
to~\cite[Proposition~4.3 and Corollary~4.4]{Ehrenborg_Kitaev_Perry}
we have the next proposition.
Furthermore, 
the proof is also similar and hence omitted.
\begin{proposition}
Let $T$ be the operator associated with a weighted
descent pattern avoidance
and~$k$ an integer such that $0 \leq k \leq m-1$.
Let $u$ be an $\ab$-word of length $m-1$ and $f$ a function
in $L^{2}([0,1]^{m})$. Then the function
$T^{k}(f)$ restricted to the descent polytope $P_{u}$
only depends on the variables $x_{1}$ through $x_{m-k}$.
\label{proposition_restricted}
\end{proposition}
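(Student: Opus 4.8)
The plan is to induct on $k$. The base case $k=0$ is immediate: $T^{0}(f) = f$, and asserting that $f$ restricted to $P_{u}$ depends only on $x_{1},\ldots,x_{m}$ says nothing. For the inductive step, suppose the claim holds for some $k$ with $0 \leq k \leq m-2$, and consider $T^{k+1}(f) = T(T^{k}(f))$. Write $g = T^{k}(f)$. By definition of $T$ in equation~(\ref{equation_T}),
$$
    T(g)(x_{1},\ldots,x_{m})
  =
    \int_{0}^{1}
        \chi(t,x_{1},\ldots,x_{m})
      \cdot
        g(t,x_{1},\ldots,x_{m-1}) \: dt  .
$$
Fix a descent polytope $P_{u}$ for an $\ab$-word $u$ of length $m-1$, and fix a point $(x_{1},\ldots,x_{m}) \in P_{u}$. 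The first thing to observe is that the integrand vanishes unless $(t,x_{1},\ldots,x_{m})$ lies in a descent polytope on which $\chi$ is nonzero; but for \emph{any} value of $t$ (outside the measure-zero set where $t$ equals some $x_{i}$), the vector $(t,x_{1},\ldots,x_{m})$ lies in one of the two descent polytopes $P_{\av u}$ or $P_{\bv u}$ in $[0,1]^{m+1}$, depending only on whether $t \leq x_{1}$ or $t \geq x_{1}$. So $\chi(t,x_{1},\ldots,x_{m})$, as a function of $t$, is $\wt(\av u)$ on $[0,x_{1}]$ and $\wt(\bv u)$ on $[x_{1},1]$.

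The second, and key, point is to apply the inductive hypothesis to $g = T^{k}(f)$. The argument vector $(t,x_{1},\ldots,x_{m-1})$ lies in $P_{\av v}$ when $t \leq x_{1}$ and in $P_{\bv v}$ when $t \geq x_{1}$, where $v = u_{1}\cdots u_{m-2}$ is the length-$(m-2)$ prefix of $u$ (this is where $k \leq m-2$, equivalently $m-1-(k+1) \geq 0$, is used so that the relevant prefixes make sense). By the inductive hypothesis, $g$ restricted to $P_{\av v}$ depends only on its first $m-k$ coordinates, i.e.\ $g(t,x_{1},\ldots,x_{m-1})$ depends only on $t,x_{1},\ldots,x_{m-k-1}$ when $t \leq x_{1}$, and similarly when $t \geq x_{1}$. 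Hence the whole integrand $\chi(t,x_{1},\ldots,x_{m}) \cdot g(t,x_{1},\ldots,x_{m-1})$ depends only on $t, x_{1}, \ldots, x_{m-k-1}$ (splitting the integral at $t = x_{1}$, each piece involves only these variables, and the splitting point $x_{1}$ is itself among them). Therefore $T^{k+1}(f)(x_{1},\ldots,x_{m})$, restricted to $P_{u}$, depends only on $x_{1},\ldots,x_{m-k-1} = x_{1},\ldots,x_{m-(k+1)}$, completing the induction.

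The only delicate bookkeeping is keeping the lengths of the $\ab$-words straight: $\chi$ is built from $\wt$ on words of length $m$, $g$ is a function on $[0,1]^{m}$ whose relevant descent words have length $m-1$, and one must check that prepending a letter to the length-$(m-2)$ prefix $v$ correctly records the descent structure of $(t,x_{1},\ldots,x_{m-1})$. Beyond that, the argument is the same splitting-at-the-first-coordinate trick as in~\cite[Proposition~4.3]{Ehrenborg_Kitaev_Perry}, and I expect no real obstacle — the main subtlety is simply to verify that the integration is genuinely over the single variable $t = x_{0}'$ and that all other coordinates appearing after integration are among $x_{1},\ldots,x_{m-k-1}$.
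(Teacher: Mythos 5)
Your induction on $k$, splitting the integral at $t=x_{1}$ and tracking how the descent word of $(t,x_{1},\ldots,x_{m-1})$ is obtained by prepending a letter to the prefix of $u$, is correct and is exactly the argument the paper has in mind (it omits the proof, deferring to the analogous Proposition~4.3 of Ehrenborg--Kitaev--Perry, which proceeds the same way). The bookkeeping of word lengths and the restriction $k\leq m-2$ in the inductive step are handled properly, so there is nothing to add.
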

A direct consequence 
of Proposition~\ref{proposition_restricted}
is that the eigenfunctions
have a special form:
\begin{corollary}
If $\varphi$ is an eigenfunction of $T$ associated
to a non-zero eigenvalue, then the eigenfunction
$\varphi$ restricted to any descent polytope $P_{u}$
only depends on the variable $x_{1}$.
\label{corollary_x_1}
\end{corollary}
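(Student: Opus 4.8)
The plan is to iterate the eigenvalue equation and then invoke Proposition~\ref{proposition_restricted} with $k = m-1$. Suppose $\varphi$ is an eigenfunction of $T$ with eigenvalue $\lambda \neq 0$, so that $T(\varphi) = \lambda \cdot \varphi$ as elements of $L^{2}([0,1]^{m})$. Applying the operator $T$ repeatedly yields $T^{k}(\varphi) = \lambda^{k} \cdot \varphi$ for every non-negative integer $k$, and since $\lambda \neq 0$ this can be rewritten as $\varphi = \lambda^{-k} \cdot T^{k}(\varphi)$. In other words, up to the non-zero scalar $\lambda^{-k}$, the eigenfunction $\varphi$ is itself of the form $T^{k}$ applied to an $L^{2}$ function, so any structural restriction on the image of $T^{k}$ transfers directly to $\varphi$.

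Next I would specialize to $k = m-1$, which is an admissible choice in Proposition~\ref{proposition_restricted} since $0 \leq m-1 \leq m-1$. Fix an $\ab$-word $u$ of length $m-1$ and consider the descent polytope $P_{u}$. Applying the proposition to the function $\varphi \in L^{2}([0,1]^{m})$, the function $T^{m-1}(\varphi)$ restricted to $P_{u}$ depends only on the variables $x_{1}$ through $x_{m-(m-1)}$, that is, only on $x_{1}$. Multiplying by the constant $\lambda^{-(m-1)}$ does not change this, so $\varphi$ restricted to $P_{u}$ depends only on $x_{1}$, which is exactly the assertion of the corollary. Since $u$ was arbitrary, this holds on every descent polytope.

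I do not expect any genuine obstacle here: the corollary is essentially a one-line consequence of Proposition~\ref{proposition_restricted}, the only real idea being the observation that a non-zero eigenvalue allows one to replace $\varphi$ by $\lambda^{-(m-1)} \cdot T^{m-1}(\varphi)$ without loss. The one point worth a word of care is that all of these equalities are equalities of $L^{2}$ functions, hence statements valid up to a set of measure zero; but this is already the standing convention in Proposition~\ref{proposition_restricted} and in the definition of $\chi$ and $T$, so no additional argument is required.
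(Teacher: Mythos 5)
Your argument is correct and is exactly what the paper means when it calls the corollary a ``direct consequence'' of Proposition~\ref{proposition_restricted}: write $\varphi = \lambda^{-(m-1)} \cdot T^{m-1}(\varphi)$ and apply the proposition with $k = m-1$, so that the restriction to each $P_{u}$ depends only on $x_{1}$ through $x_{m-(m-1)} = x_{1}$. No issues.
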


Let $V$ be the subspace of $L^{2}([0,1]^{m})$
consisting of all functions $f$ such that
the restriction $f|_{P_{u}}$ only depends on
the variable $x_{1}$ for all words $u$ of length $m-1$.
Let $f$ be a function in the subspace~$V$.
Then the function $T(f)$ is described as follows.
For an $\ab$-word $u$
of length $m-2$ and $y \in\{\av,\bv\}$ we have
\begin{equation}
\left. T(f) \right| _{P_{u y}}
      = 
\int_{0}^{x_{1}} \wt(\av u y) \cdot
   \left. f(t) \right|_{P_{\av u}}  dt 
      +
\int_{x_{1}}^{1} \wt(\bv u y) \cdot
   \left. f(t) \right|_{P_{\bv u}}  dt .
\label{equation_descent_operator}
\end{equation}
In light of Corollary~\ref{corollary_x_1}
to solve 
the eigenvalue problem
for the operator
$T : L^{2}([0,1]^{m}) \longrightarrow L^{2}([0,1]^{m})$,
it is enough to solve the eigenvalue problem
for the restricted operator $T|_{V} : V \longrightarrow V$.
The restricted operator is of a particular form,
which we describe in the next section.

\section{A general operator and its spectrum}

Recall that for a square matrix $M$ the exponential matrix of $M$ is
defined by the converging power series
$$     e^{M} = \sum_{k \geq 0} M^{k}/k!
             = I + M + M^{2}/2 + M^{3}/3! + \cdots   .   $$
The general solution of
the system of first order linear
equations $\frac{d}{dx} \vec{p}(x) = M \cdot \vec{p}(x)$
is given by
$\vec{p}(x) = e^{M \cdot x} \cdot \vec{c}$
where $\vec{c}$ is the initial condition
$\vec{p}(0)$.

Let $\gamma(M)$ denote the matrix
$$ \gamma(M) = \int_{0}^{1} e^{M \cdot t} dt  , $$
where the integration is entrywise.
Observe that
\begin{equation}
 M \cdot \gamma(M) = \int_{0}^{1} M \cdot e^{M \cdot t} dt
                   = \left[e^{M \cdot t}\right]_{0}^{1}
                   = e^{M} - I .
\label{equation_gamma_relation}
\end{equation}
Hence when $M$ is non-singular
we can write $\gamma(M) = M^{-1} \cdot(e^{M} - I)$.
Also note that by integrating the power series
of $e^{M \cdot t}$ term by term
we obtain that
$$ \gamma(M) = \sum_{k \geq 0} M^{k}/(k+1)!
             = I + M/2 + M^{2}/3! + M^{3}/4! + \cdots   .   $$

\begin{lemma}
The two following indefinite integrals hold:
\begin{eqnarray*}
      \int e^{M \cdot t} \: dt
  & = &
      \gamma(M \cdot t) \cdot t + \vec{C} , \\
      \int M \cdot t \cdot e^{M \cdot t} \: dt
  & = &
      t \cdot e^{M \cdot t} - \gamma(M \cdot t) \cdot t + \vec{C} .
\end{eqnarray*}
\end{lemma}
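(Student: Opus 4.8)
The plan is to verify both identities by differentiating the proposed antiderivatives and checking that the derivative equals the integrand; since all the functions involved are continuous (indeed real-analytic) in $t$, this establishes the indefinite integrals up to the stated additive constant $\vec{C}$.

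First I would record the power series for $\gamma(M\cdot t)\cdot t$. Substituting $M\cdot t$ for $M$ in the series $\gamma(M) = \sum_{k \geq 0} M^{k}/(k+1)!$ gives $\gamma(M\cdot t) = \sum_{k \geq 0} M^{k} t^{k}/(k+1)!$, and hence $\gamma(M\cdot t)\cdot t = \sum_{k \geq 0} M^{k} t^{k+1}/(k+1)!$. This matrix-valued power series, together with its formal term-by-term derivative, converges uniformly on every compact interval of $t$, so term-by-term differentiation is legitimate and yields $\frac{d}{dt}\bigl(\gamma(M\cdot t)\cdot t\bigr) = \sum_{k \geq 0} M^{k} t^{k}/k! = e^{M\cdot t}$. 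This proves the first identity.

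For the second identity I would use the product rule together with the fact that $M$ commutes with $e^{M\cdot t}$ (both being power series in $M$). Thus $\frac{d}{dt}\bigl(t\cdot e^{M\cdot t}\bigr) = e^{M\cdot t} + t\cdot M\cdot e^{M\cdot t} = e^{M\cdot t} + M\cdot t\cdot e^{M\cdot t}$. Subtracting the derivative of $\gamma(M\cdot t)\cdot t$ just computed, namely $e^{M\cdot t}$, leaves $M\cdot t\cdot e^{M\cdot t}$, which is precisely the integrand of the second integral.

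The argument is essentially routine; the only point deserving a word of care is the term-by-term differentiation of a matrix-valued power series, which is justified by the uniform convergence noted above. Alternatively, one can sidestep series entirely: the substitution $s = t\cdot u$ in $\int_{0}^{1} e^{M\cdot t\cdot u}\cdot t\; du$ shows $\gamma(M\cdot t)\cdot t = \int_{0}^{t} e^{M\cdot s}\; ds$, so the first identity is immediate from the fundamental theorem of calculus and the second follows from the product rule exactly as above. I do not anticipate any genuine obstacle.
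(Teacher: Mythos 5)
Your proposal is correct and follows essentially the same route as the paper: the paper integrates the power series of $e^{M\cdot t}$ termwise for the first identity and integrates the product-rule identity $M \cdot t \cdot e^{M \cdot t} + e^{M \cdot t} = \frac{d}{dt}\left(t \cdot e^{M \cdot t}\right)$ for the second, which is exactly your computation read in the opposite direction. Verifying by differentiation versus by integration is an immaterial difference here, so there is nothing to add.
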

\begin{proof}
The first identity follows by integrating the power
series termwise. The second identity follows
from integrating the equality
$M \cdot t \cdot e^{M \cdot t} + e^{M \cdot t}
  = \frac{d}{dt} \left(t \cdot e^{M \cdot t}\right)$.
\end{proof}

Let $A$ and $B$ be two $k \times k$ matrices.
Consider the integral operator $T$ defined on
vector-valued functions by
\begin{equation}
     T(\vec{p}(x))
   =
     A \cdot \int_{0}^{x} \vec{p}(t) \: dt
   +
     B \cdot \int_{x}^{1} \vec{p}(t) \: dt   ,  
\label{equation_operator_on_matrix_form}
\end{equation}
where the integration is componentwise.

Observe that the restricted operator described in
equation~\eqref{equation_descent_operator}
is of the form~\eqref{equation_operator_on_matrix_form}
by letting~$A$ and $B$ be matrices indexed by
$\ab$-words of length $m-1$ and the entries be given by
$$   A_{u y, \av u} = \wt(\av u y)
    \:\:\:\: \mbox{ and } \:\:\:\:
     B_{u y, \bv u} = \wt(\bv u y)   $$
where $y \in \{\av,\bv\}$
and $u$ is an $\ab$-word of length $m-2$,
and the remaining entries of the matrices are~$0$.

The following theorem concerns
the eigenvalues and eigenfunctions of the operator
in~\eqref{equation_operator_on_matrix_form}.
\begin{theorem}
The non-zero spectrum of the operator $T$ 
is given by the set of non-zero roots
of the equation $\det(P) = 0$, where the matrix $P$ is given by
\begin{equation}
P
  =
- \lambda \cdot I
  + 
B \cdot \gamma((A-B)/\lambda)  ,
\label{equation_P}
\end{equation}
and the eigenfunctions are of the form
$\vec{p}(x)
        =
     e^{(A-B)/\lambda \: \cdot \: x} \cdot \vec{c}$,
where the vector $\vec{c}$ satisfies the equation
$P \cdot \vec{c} = 0$.
\label{theorem_operator}
\end{theorem}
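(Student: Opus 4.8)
The plan is to reduce the eigenvalue equation $T(\vec{p}) = \lambda \cdot \vec{p}$ for $\lambda \neq 0$ to a system of first-order linear differential equations together with a single boundary condition, and then to extract from it the conditions on $\lambda$ and on the initial vector $\vec{p}(0)$. By the first part of Theorem~\ref{theorem_expansion} the non-zero spectrum consists of eigenvalues of finite multiplicity (the operator in~\eqref{equation_operator_on_matrix_form} is an integral operator with bounded matrix kernel $A \cdot \mathbf{1}_{t < x} + B \cdot \mathbf{1}_{t > x}$, hence compact), so it suffices to describe the non-zero eigenvalues. First I would record a regularity remark: for any $\vec{p}$ in the domain the function $T(\vec{p})(x) = A \cdot \int_{0}^{x} \vec{p}(t) \: dt + B \cdot \int_{x}^{1} \vec{p}(t) \: dt$ is continuous in $x$, so if $T(\vec{p}) = \lambda \cdot \vec{p}$ with $\lambda \neq 0$ then $\vec{p}$ agrees almost everywhere with a continuous function, and then $T(\vec{p})$ is actually $C^{1}$. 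Hence every eigenfunction for a non-zero eigenvalue may be taken to be $C^{1}$, which justifies differentiating the eigenvalue equation.

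The key step is that $T(\vec{p}) = \lambda \cdot \vec{p}$ holds for a $C^{1}$ function $\vec{p}$ if and only if $\vec{p}$ satisfies both the differential equation
$$ \vec{p}\,'(x) = \frac{1}{\lambda} \cdot (A - B) \cdot \vec{p}(x) $$
and the boundary condition $\lambda \cdot \vec{p}(0) = B \cdot \int_{0}^{1} \vec{p}(t) \: dt$. Indeed, by the fundamental theorem of calculus the function $x \longmapsto \lambda \cdot \vec{p}(x) - A \cdot \int_{0}^{x} \vec{p}(t) \: dt - B \cdot \int_{x}^{1} \vec{p}(t) \: dt$ has derivative $\lambda \cdot \vec{p}\,'(x) - (A-B) \cdot \vec{p}(x)$, so it is identically zero exactly when its derivative vanishes (the differential equation) and it vanishes at one point, which for $x = 0$ is the stated boundary condition. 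The general solution of the differential equation is $\vec{p}(x) = e^{(A-B)/\lambda \: \cdot \: x} \cdot \vec{c}$ with $\vec{c} = \vec{p}(0)$, giving the asserted form of the eigenfunctions; substituting this into the boundary condition and using $\int_{0}^{1} e^{(A-B)/\lambda \: \cdot \: t} \: dt = \gamma((A-B)/\lambda)$ turns the condition into $\lambda \cdot \vec{c} = B \cdot \gamma((A-B)/\lambda) \cdot \vec{c}$, that is, $P \cdot \vec{c} = 0$ with $P$ as in~\eqref{equation_P}.

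It then remains to combine the two directions. If $\lambda \neq 0$ is an eigenvalue, its eigenfunction equals $e^{(A-B)/\lambda \: \cdot \: x} \cdot \vec{c}$ with $\vec{c} = \vec{p}(0) \neq 0$ and $P \cdot \vec{c} = 0$, whence $\det(P) = 0$. Conversely, if $\lambda \neq 0$ and $\det(P) = 0$, choose $\vec{c} \neq 0$ with $P \cdot \vec{c} = 0$ and put $\vec{p}(x) = e^{(A-B)/\lambda \: \cdot \: x} \cdot \vec{c}$; this $\vec{p}$ is continuous, hence in the domain, is nonzero since $\vec{p}(0) = \vec{c}$, and satisfies both the differential equation (by construction) and the boundary condition (because $P \cdot \vec{c} = 0$), so $T(\vec{p}) = \lambda \cdot \vec{p}$ by the equivalence above. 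The one genuinely delicate point I expect is the regularity bootstrap that promotes an a priori merely $L^{2}$ eigenfunction to a $C^{1}$ function; note that all the matrix computations are carried out through $\gamma$ rather than through an inverse, so no invertibility hypothesis on $A - B$ is needed, $\gamma$ being defined by an everywhere-convergent power series.
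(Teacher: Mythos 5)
Your proposal is correct and follows essentially the same route as the paper: differentiate the eigenvalue equation to get $\vec{p}\,'=(A-B)/\lambda\cdot\vec{p}$, solve it as $e^{(A-B)/\lambda\cdot x}\cdot\vec{c}$, and substitute back to obtain $P\cdot\vec{c}=0$. The only differences are refinements the paper leaves implicit --- you justify the regularity bootstrap, replace the full resubstitution by the equivalent one-point boundary condition at $x=0$, and spell out the converse direction --- all of which are sound.
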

\begin{proof}
Differentiate 
the eigenfunction equation $\lambda \cdot \vec{p} = T(\vec{p})$
with respect to $x$ to obtain the differential equation
$$   \frac{d}{dx} \vec{p}(x)
   =
     M \cdot \vec{p}(x)  ,  $$
where we let $M$ denote the matrix $1/\lambda \cdot (A-B)$.
This equation has the solution
$$   \vec{p}(x) = e^{M \: \cdot \: x} \cdot \vec{c}  , $$
where $\vec{c}$ is the initial condition.
Substituting the solution for the differential equation back
into the eigenfunction equation, we obtain
\begin{eqnarray*}
\lambda \cdot e^{M \: \cdot \: x} \cdot \vec{c}
  & = &
A \cdot \int_{0}^{x} e^{M \: \cdot \: t} \cdot \vec{c} \: dt
  +
B \cdot \int_{x}^{1} e^{M \: \cdot \: t} \cdot \vec{c} \: dt \\
  & = &
A \cdot \left[ 
  \gamma(M \cdot t) \cdot t
         \right]_{0}^{x} \cdot \vec{c}
  +
B \cdot \left[ 
  \gamma(M \cdot t) \cdot t
        \right]_{x}^{1} \cdot \vec{c} \\
  & = &
\left(
(A-B) \cdot \gamma(M \cdot x) \cdot x
  + 
B \cdot \gamma(M)
\right)
\cdot \vec{c}   \\
  & = &
\left(
\lambda \cdot \left(e^{M \: \cdot \: x} - I\right)
  + 
B \cdot \gamma(M)
\right)
\cdot \vec{c}   .
\end{eqnarray*}
Canceling terms we obtain
$P \cdot \vec{c} = 0$.
We can only find the non-zero vector $\vec{c}$ if
the matrix $P$ is singular, that is,
has a zero determinant.
\end{proof}

In the case when $A-B$ is non-singular
the condition in 
Theorem~\ref{theorem_operator}
can be expressed as
\begin{eqnarray*}
   0
  & = &
     \det(P) \cdot \det(M) \\
  & = &
     \det\left(
- A
  + 
B \cdot e^{(A-B)/\lambda}
\right) .
\end{eqnarray*}

\begin{theorem}
An eigenvalue $\lambda$ of the operator $T$
is simple if
its associated eigenfunction $\vec{p}(x)$ satisfies
the vector identity
\begin{equation}
    B \cdot e^{(A-B)/\lambda} \cdot \vec{p}(0) \neq 0 . 
\label{equation_c_c}
\end{equation}
\label{theorem_simple}
\end{theorem}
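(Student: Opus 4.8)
The plan is to take for granted that $\lambda$ is an eigenvalue with a one-dimensional eigenspace spanned by the given eigenfunction $\vec{p}$, so that establishing simplicity of $\lambda$ amounts to ruling out a Jordan chain of length two, that is, the existence of a vector $\vec{q}$ with $(T-\lambda\cdot I)\vec{q}=\vec{p}$. By Theorem~\ref{theorem_operator} we may write $\vec{p}(x)=e^{(A-B)/\lambda\,\cdot\,x}\cdot\vec{p}(0)$ with $P\cdot\vec{p}(0)=0$; throughout set $M=(A-B)/\lambda$ and $\vec{c}=\vec{p}(0)$, so that $B\gamma(M)\vec{c}=\lambda\vec{c}$.

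First I would differentiate the chain equation $T(\vec{q})=\lambda\vec{q}+\vec{p}$ in $x$, exactly as in the proof of Theorem~\ref{theorem_operator}: since $\frac{d}{dx}T(\vec{q})=(A-B)\vec{q}$ and $\vec{p}\,'=Me^{Mx}\vec{c}$, one obtains the inhomogeneous system $\frac{d}{dx}\vec{q}=M\vec{q}-\tfrac{1}{\lambda}Me^{Mx}\vec{c}$. The forcing term is itself a homogeneous solution, so resonance adds one power of $x$ to the particular solution and the general solution is $\vec{q}(x)=e^{Mx}\vec{d}-\tfrac{1}{\lambda}\,x\,e^{Mx}M\vec{c}$, with $\vec{d}=\vec{q}(0)$ free. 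Since differentiating the integral equation discards only its value at $x=0$, the function $\vec{q}$ solves the chain equation if and only if $B\int_{0}^{1}\vec{q}(t)\,dt=\lambda\vec{d}+\vec{c}$. Evaluating $\int_{0}^{1}\vec{q}(t)\,dt=\gamma(M)\vec{d}-\tfrac{1}{\lambda}\left(e^{M}-\gamma(M)\right)\vec{c}$ with the indefinite integrals of the Lemma preceding Theorem~\ref{theorem_operator} (in particular $\int_{0}^{1}Mte^{Mt}\,dt=e^{M}-\gamma(M)$), substituting, and cancelling the $\vec{c}$-terms using $B\gamma(M)\vec{c}=\lambda\vec{c}$, everything collapses to the single linear system $P\cdot\vec{d}=\tfrac{1}{\lambda}\,B\,e^{(A-B)/\lambda}\cdot\vec{p}(0)$. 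Hence a length-two Jordan chain at $\lambda$ exists exactly when $B\,e^{(A-B)/\lambda}\vec{p}(0)$ lies in the column space of $P$.

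The remaining step, which I expect to be the main obstacle, is to conclude that a nonzero vector $B\,e^{(A-B)/\lambda}\vec{p}(0)$ cannot lie in the column space of $P$. By the Fredholm alternative this is the condition $\vec{\ell}^{\,T}B\,e^{(A-B)/\lambda}\vec{p}(0)\neq 0$, where $\vec{\ell}$ spans the (one-dimensional) left kernel of $P$; up to the nonzero factor $-1/\lambda$ this pairing equals $\vec{\ell}^{\,T}P'(\lambda)\vec{p}(0)$, since differentiating $P(\mu)=-\mu\cdot I+B\gamma((A-B)/\mu)$ and using $P\vec{c}=0$ gives the clean identity $P'(\lambda)\vec{p}(0)=-\tfrac{1}{\lambda}\,B\,e^{(A-B)/\lambda}\vec{p}(0)$, and by Jacobi's formula the same quantity is, up to a nonzero scalar, $\frac{d}{d\mu}\det(P(\mu))$ at $\mu=\lambda$; so its non-vanishing says precisely that $\lambda$ is a simple root of the eigenvalue equation $\det(P)=0$. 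The delicate point is thus passing from non-vanishing of the \emph{vector} $B\,e^{(A-B)/\lambda}\vec{p}(0)$ to non-vanishing of this pairing; I would extract this from the reversal symmetry of Lemma~\ref{lemma_J} when it is available (it realizes $\vec{\ell}$ as $J\vec{p}(0)$) and otherwise from the explicit sparse form of the matrices $A$ and $B$, noting in any case that it is immediate when $P$ is the zero matrix, since then the column space of $P$ is $\{0\}$.
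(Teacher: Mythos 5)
Your derivation follows the same route as the paper's own proof---differentiate the chain equation, solve the resulting ODE, and substitute back into the integral equation---and, writing $M=(A-B)/\lambda$, your computation up to the solvability condition $P\cdot\vec{d}=\tfrac{1}{\lambda}\,B\,e^{M}\,\vec{p}(0)$ is correct and in fact more careful than the paper's. At the corresponding point the paper simply declares ``without loss of generality we can set $\vec{d}=0$ since we are looking for a particular solution,'' substitutes the particular solution $\tfrac{1}{\lambda}\,x\,e^{Mx}M\vec{c}$ alone into the generalized eigenvalue equation, and cancels against $P\vec{c}=0$ to arrive directly at $B\,e^{M}\vec{c}=0$, contradicting the hypothesis~\eqref{equation_c_c}. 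That reduction is exactly the step you decline to take, and rightly so: $e^{Mx}\vec{d}$ is an eigenfunction (hence removable from a putative generalized eigenvector) only when $P\vec{d}=0$, while in general $(T-\lambda I)(e^{Mx}\vec{d})=P\vec{d}$ is a nonzero constant vector that can absorb part of the inhomogeneity.

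The gap in your write-up is the one you flag yourself. The theorem's hypothesis is that the \emph{vector} $B\,e^{M}\vec{p}(0)$ is nonzero, but your (correct) analysis shows that what must be excluded is $B\,e^{M}\vec{p}(0)$ lying in the column space of $P$, i.e.\ $\vec{\ell}^{\,T}B\,e^{M}\vec{p}(0)\neq 0$ for $\vec{\ell}$ spanning the left kernel. Your identity $P'(\lambda)\vec{p}(0)=-\tfrac{1}{\lambda}B\,e^{M}\vec{p}(0)$ does check out, and it yields the clean reformulation that no length-two Jordan chain exists above $\vec{p}$ precisely when $\lambda$ is a simple root of $\det P=0$; but that is a genuinely different hypothesis from~\eqref{equation_c_c}, and neither Lemma~\ref{lemma_J} nor the sparsity of $A$ and $B$ obviously bridges the two in general (nor does either argument address the possibility that $\ker P$ has dimension greater than one). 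So as written your argument proves the theorem only under the stronger column-space assumption. If you accept the paper's reduction to $\vec{d}=0$, the proof closes at once; if you do not, you have located a point where the published proof is itself incomplete, and the honest repair is either to replace~\eqref{equation_c_c} by the condition $\frac{d}{d\mu}\det P(\mu)\big|_{\mu=\lambda}\neq 0$, or to verify the column-space condition directly in each application (in Section~\ref{section_2_1_1_0}, for instance, $P$ is $2\times 2$ of rank one and the check is immediate).
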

\begin{proof}
Assume that the eigenvalue $\lambda$ is not simple,
that is, it satisfies the generalized eigenvalue equation
$\lambda \cdot \vec{q} = T(\vec{q}) + \vec{p}$.
Differentiate this equation to obtain
$$  \lambda \cdot \frac{d}{dx} \vec{q}(x)
       =
    (A-B) \cdot \vec{q}(x)
       +
    \frac{d}{dx}\vec{p}(x)   .  $$
Again let $M=(A-B)/\lambda$. Multiply both sides
with $1/\lambda \cdot e^{-M \cdot x}$ to obtain
$$   e^{-M \cdot x} \cdot \frac{d}{dx} \vec{q}(x)
   -
     M \cdot e^{-M \cdot x} \cdot \vec{q}(x)
   =
    {1}/{\lambda} \cdot e^{-M \cdot x} \cdot \frac{d}{dx} \vec{p}(x)
. $$
This equation is equivalent to
$$   \frac{d}{dx} \left(e^{-M \cdot x} \cdot \vec{q}(x)\right)
   =
    {1}/{\lambda} \cdot M \cdot \vec{c} . $$
Hence we have the general solution
$$  \vec{q}(x)
   =
    {1}/{\lambda} \cdot e^{M \cdot x} \cdot M \cdot \vec{c} \cdot x
     +
    e^{M \cdot x} \cdot \vec{d}  , $$
where $\vec{d}$ is a constant vector.
Without loss of generality we can set $\vec{d} = 0$
since we are looking for a particular solution.
Inserting the particular solution
$1/\lambda \cdot e^{M \cdot x} \cdot M \cdot \vec{c} \cdot x$
into the generalized eigenvalue equation, we obtain
\begin{eqnarray*}
  M \cdot x \cdot e^{M \cdot x} \cdot \vec{c}
  & = &
  A/\lambda \cdot \int_{0}^{x}
  M \cdot t \cdot e^{M \cdot t} dt \cdot \vec{c}
    +
  B/\lambda \cdot \int_{x}^{1}
  M \cdot t \cdot e^{M \cdot t} dt \cdot \vec{c}
    +
   e^{M \: \cdot \: x} \cdot \vec{c}  \\
  & = &
  A/\lambda \cdot
\left[
    t \cdot e^{M \cdot t} - \gamma(M \cdot t) \cdot t
\right]_{0}^{x} \cdot \vec{c}
    +
  B/\lambda \cdot
\left[
    t \cdot e^{M \cdot t} - \gamma(M \cdot t) \cdot t
\right]_{x}^{1} \cdot \vec{c}
    +
   e^{M \: \cdot \: x} \cdot \vec{c}  \\
  & = &
  M \cdot
  \left(
    x \cdot e^{M \cdot x} - \gamma(M \cdot x) \cdot x
  \right) \cdot \vec{c}
    +
  B/\lambda \cdot
\left(
    e^{M} - \gamma(M)
\right) \cdot \vec{c}
    +
   e^{M \: \cdot \: x} \cdot \vec{c} .
\end{eqnarray*}
Canceling terms 
using the identity~\eqref{equation_gamma_relation}
and multiplying by $\lambda$ we have
\begin{eqnarray*}
0
  & = &
  B \cdot
\left(
    e^{M} - \gamma(M)
\right) \cdot \vec{c}
    +
   \lambda \cdot \vec{c} .
\end{eqnarray*}
Adding the equation $P \cdot \vec{c} = 0$ to this identity gives
us the conclusion of the theorem.
\end{proof}

\section{Two length four examples}
\label{section_length_four}

\subsection{No triple ascents, no triple descents}
\label{subsection_aaa_bbb}

Let us consider the case when we avoid
the two words $\av\av\av$ and $\bv\bv\bv$.
This is equivalent to avoiding the 
consecutive patterns $1234$ and $4321$.
In this case we have the two matrices
$$   A
   = 
     \begin{pmatrix}
       0 & 0 & 0 & 0 \\
       1 & 0 & 0 & 0 \\
       0 & 1 & 0 & 0 \\
       0 & 1 & 0 & 0
     \end{pmatrix}
              \:\: \mbox{ and } \:\:
     B
   = 
     \begin{pmatrix}
       0 & 0 & 1 & 0 \\
       0 & 0 & 1 & 0 \\
       0 & 0 & 0 & 1 \\
       0 & 0 & 0 & 0
     \end{pmatrix}    .    $$
Note the matrix $A-B$ is invertible and diagonalizable.
To simplify calculations let
$$ \tau   = \sqrt{\frac{1+\sqrt{5}}{2}}
         \:\:\:\: \text{ and } \:\:\:\:
   \sigma = \sqrt{\frac{-1+\sqrt{5}}{2}}  .  $$
That is, the four eigenvalues of the matrix $A-B$
are $\pm \sigma$ and $\pm \tau \cdot i$.

Using a computer algebra package as Maple,
we obtain that the determinant of the matrix $P$ from
Theorem~\ref{theorem_operator} expands as
\begin{eqnarray*}
     \frac{20}{\lambda^{4}}
   \cdot
     \det(P)
  & = &
         8
       + 
         \left( 3+i + \sqrt{5} \cdot (\tau + \sigma\cdot i) \right)
         \cdot
         e^{(\sigma + \tau \cdot i)/{\lambda}} \\
  &   &
       + 
         \left( 3-i + \sqrt{5} \cdot (\tau - \sigma\cdot i) \right)
         \cdot
         e^{(\sigma - \tau \cdot i)/{\lambda}} \\
  &   &
       + 
         \left( 3-i + \sqrt{5} \cdot (-\tau + \sigma\cdot i) \right)
         \cdot
         e^{(-\sigma + \tau \cdot i)/{\lambda}} \\
  &   &
       + 
         \left( 3+i + \sqrt{5} \cdot (-\tau - \sigma\cdot i) \right)
         \cdot
         e^{(-\sigma - \tau \cdot i)/{\lambda}} .
\end{eqnarray*}
Thus we obtain
\begin{proposition}
Let $\lambda_{0}$ be the largest real positive root of the equation
\begin{eqnarray}
        -8
  & = &
         \left( 3+i + \sqrt{5} \cdot (\tau + \sigma\cdot i) \right)
         \cdot
         e^{(\sigma + \tau \cdot i)/{\lambda}}
\nonumber \\
  &   &
       + 
         \left( 3-i + \sqrt{5} \cdot (\tau - \sigma\cdot i) \right)
         \cdot
         e^{(\sigma - \tau \cdot i)/{\lambda}}
\nonumber \\
  &   &
       + 
         \left( 3-i + \sqrt{5} \cdot (-\tau + \sigma\cdot i) \right)
         \cdot
         e^{(-\sigma + \tau \cdot i)/{\lambda}}
\nonumber \\
  &   &
       + 
         \left( 3+i + \sqrt{5} \cdot (-\tau - \sigma\cdot i) \right)
         \cdot
         e^{(-\sigma - \tau \cdot i)/{\lambda}}  .
\label{equation_aaa_bbb}
\end{eqnarray}
Then $\lambda_{0}$ is the largest eigenvalue (in modulus)
of the associated operator $T$
and the asymptotics of the number of permutations
without triple ascents and triple descents
is given by
$$   \alpha_{n}/n!
   =
     c \cdot \lambda_{0}^{n-3} + O(r^{n})   ,  $$
where $c$ and $r$ are two positive constants such that
$r < \lambda_{0}$.
\label{proposition_aaa_bbb}
\end{proposition}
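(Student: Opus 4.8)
The plan is to combine Theorem~\ref{theorem_operator}, Corollary~\ref{corollary_x_1} and Theorem~\ref{theorem_expansion} with a positivity argument. First, with $A,B$ the two displayed matrices, the restricted operator $T|_{V}$ of Corollary~\ref{corollary_x_1} is exactly of the form~\eqref{equation_operator_on_matrix_form}, so by Theorem~\ref{theorem_operator} the non-zero eigenvalues of $T$ are precisely the non-zero roots of $\det(P)=0$ with $P=-\lambda\cdot I+B\cdot\gamma((A-B)/\lambda)$. Since $A-B$ is invertible and diagonalizable with eigenvalues $\pm\sigma,\pm\tau\cdot i$, one diagonalizes, writes $\gamma((A-B)/\lambda)$ termwise in the four exponentials $e^{\pm\sigma/\lambda},e^{\pm\tau\cdot i/\lambda}$, and multiplies out the determinant; the Maple computation quoted before the proposition gives $\tfrac{20}{\lambda^{4}}\det(P)=8+(\text{the four exponential terms})$. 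Hence for $\lambda\neq 0$ the equation $\det(P)=0$ is equivalent to~\eqref{equation_aaa_bbb}, and the non-zero spectrum of $T$ is exactly the non-zero solution set of~\eqref{equation_aaa_bbb}.

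Next I would single out the dominant root. All weights here are $0$ or $1$, so $T$ maps non-negative functions to non-negative functions, and the transfer graph on the four descent words of length two — obtained by deleting the two self-loops corresponding to the forbidden factors $\av\av\av$ and $\bv\bv\bv$ — is strongly connected and aperiodic (it contains cycles of lengths $2$ and $3$), hence primitive. Consequently a suitable power $T^{N}$ is compact with an a.e.\ strictly positive kernel, so by the Krein--Rutman theorem $T^{N}$ has a real, positive, simple eigenvalue of strictly maximal modulus; translating back, $T$ has a real positive simple eigenvalue $\Lambda$ with $|\mu|<\Lambda$ for every other eigenvalue $\mu$. Being a positive real eigenvalue, $\Lambda$ is a positive real root of~\eqref{equation_aaa_bbb}, and strict dominance forces it to be the largest such root, i.e.\ $\Lambda=\lambda_{0}$. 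Thus $\lambda_{0}$ is the eigenvalue of $T$ of largest modulus, and it is simple. (As a cross-check one may instead verify the non-degeneracy criterion~\eqref{equation_c_c} of Theorem~\ref{theorem_simple} at the eigenfunction $e^{(A-B)x/\lambda_{0}}\cdot\vec{c}$.)

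Finally I would compute the constant and conclude. The set $\{\av\av\av,\bv\bv\bv\}$ is closed under reverse--complement, equivalently $\wt$ satisfies the symmetry hypothesis of Lemma~\ref{lemma_J}, so the adjoint eigenfunction may be taken to be $\psi=J\varphi_{0}$ where $\varphi_{0}>0$ is the Krein--Rutman eigenfunction; since $J$ preserves positivity, $\psi>0$ as well. Because $\wt_{1}=\wt_{2}\equiv 1$ we have $\kappa=\mu\equiv 1$, so the coefficient of $\lambda_{0}^{n-3}$ in the expansion of Theorem~\ref{theorem_expansion} is $c=\pair{\varphi_{0}}{\mu}\cdot\pair{\kappa}{\overline{\psi}}/\pair{\varphi_{0}}{\overline{\psi}}$, which reduces to a ratio of inner products of the positive functions $\varphi_{0}$ and $\psi$ and is therefore strictly positive. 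Choosing $r$ with $|\mu|<r<\lambda_{0}$ for every eigenvalue $\mu\neq\lambda_{0}$ — possible since the eigenvalues accumulate only at $0$, so there is a spectral gap below $\lambda_{0}$ — Theorem~\ref{theorem_expansion} with $k=1$ yields $\alpha_{n}/n!=c\cdot\lambda_{0}^{n-3}+O(r^{n})$, as claimed. The main obstacle in this argument is the dominance step of the second paragraph, namely establishing the primitivity/positivity that makes Krein--Rutman applicable; the rest is a direct application of the machinery already developed.
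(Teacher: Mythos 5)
Your proposal is correct and follows essentially the same route as the paper: the paper's proof likewise reduces everything to the observation that the de Bruijn graph with the edges $\av\av\rightarrow\av\av$ and $\bv\bv\rightarrow\bv\bv$ removed is ergodic, and then invokes Theorems~1.7 and~4.2 of Ehrenborg--Kitaev--Perry, which encapsulate exactly the Krein--Rutman/positivity argument you spell out for the existence of a simple, real, strictly dominant eigenvalue with positive constant. The only difference is that you unpack the content of those cited theorems rather than quoting them.
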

\begin{proof}
It remains to show that the eigenvalue $\lambda_{0}$ is simple.
Observe that the de Bruijn graph
with the two directed edges
$\av\av \stackrel{\av\av\av}{\longrightarrow} \av\av$
and
$\bv\bv \stackrel{\bv\bv\bv}{\longrightarrow} \bv\bv$
removed is ergodic.
Now the conclusion follows from combining Theorems~1.7 and~4.2
in~\cite{Ehrenborg_Kitaev_Perry}.
\end{proof}

Solving equation~\eqref{equation_aaa_bbb} numerically
we obtain the three largest roots:
\begin{eqnarray*}
 \lambda_{0}  
  & = &
0.9240358576\ldots \\
 \lambda_{1,2}
  & = &
-0.2875224461\ldots \pm 0.4015233122\ldots \cdot i .
\end{eqnarray*}
Hence we have that 
$r$ is bounded below by
$|\lambda_{1,2}| = 0.4938523335\ldots$.

For the eigenvalue $\lambda = 0.9240358576\ldots$
we can solve for the vector $\vec{c}$ and we have
$$   \vec{c}
   =
     \begin{pmatrix}
             0.6536190979\ldots \\
             0.6536190979\ldots \\
             0.3815287011\ldots \\
             0
     \end{pmatrix}   .  $$
Thus we have the eigenfunction
$\varphi = e^{(A-B)/\lambda \: \cdot \: x} \cdot \vec{c}$
and adjoint eigenfunction 
$\psi = J \varphi$.
Note that when we restrict the adjoint eigenfunction
$\psi$ to a descent polytope we obtain a function only
depending on the last variable $x_{3}$.
For these two functions we calculate
\begin{eqnarray*}
\pair{\varphi}{{\mathbf 1}}
    =
\pair{{\mathbf 1}}{\overline{\psi}}
  & = &
0.6020376937\ldots ,
\\
\pair{\varphi}{\overline{\psi}}
  & = &
0.3647767214\ldots .
\end{eqnarray*}
Combining this we have the constant
$$ 
\frac{\pair{\varphi}{{\mathbf 1}} \cdot
      \pair{{\mathbf 1}}{\overline{\psi}}}
     {\pair{\varphi}{\overline{\psi}}} 
   =
0.9936198319\ldots . $$
Thus in numerical terms we have that
the asymptotics for
the number of permutations with no triple ascents
and triple descent is given by
$$    0.9936198319\ldots
         \cdot
      (0.9240358576\ldots)^{n-3}
         \cdot n!   . $$

\subsection{Avoiding isolated ascents and descents}

We next consider the case when we avoid
the two words $\av\bv\av$ and $\bv\av\bv$.
This is equivalent to avoiding the ten alternating
permutations
$1324$, $1423$, $2314$, $2413$, $3412$ and
$2143$, $3142$, $3241$, $4132$, $4231$.
In this case we have the two matrices
$$   A
   = 
     \begin{pmatrix}
       1 & 0 & 0 & 0 \\
       1 & 0 & 0 & 0 \\
       0 & 0 & 0 & 0 \\
       0 & 1 & 0 & 0
     \end{pmatrix}
              \:\: \mbox{ and } \:\:
     B
   = 
     \begin{pmatrix}
       0 & 0 & 1 & 0 \\
       0 & 0 & 0 & 0 \\
       0 & 0 & 0 & 1 \\
       0 & 0 & 0 & 1
     \end{pmatrix}    .    $$
Yet again the matrix $A-B$ is invertible and diagonalizable.
The eigenvalues are $\pm \tau$ and $\pm \sigma \cdot i$.
Similar to
Proposition~\ref{proposition_aaa_bbb}
we have:
\begin{proposition}
Let $\lambda_{0}$
be the largest real positive root of the equation
\begin{eqnarray}
        -8
  & = &
         \left( 3-i + \sqrt{5} \cdot (-\tau + \sigma\cdot i) \right)
         \cdot
         e^{(\tau + \sigma \cdot i)/{\lambda}}
\nonumber \\
  &   &
       + 
         \left( 3+i + \sqrt{5} \cdot (-\tau - \sigma\cdot i) \right)
         \cdot
         e^{(\tau - \sigma \cdot i)/{\lambda}}
\nonumber \\
  &   &
       + 
         \left( 3+i + \sqrt{5} \cdot (\tau + \sigma\cdot i) \right)
         \cdot
         e^{(-\tau + \sigma \cdot i)/{\lambda}}
\nonumber \\
  &   &
       + 
         \left( 3-i + \sqrt{5} \cdot (\tau - \sigma\cdot i) \right)
         \cdot
         e^{(-\tau - \sigma \cdot i)/{\lambda}} .
\label{equation_aba_bab}
\end{eqnarray}
Then $\lambda_{0}$ is the largest eigenvalue (in modulus)
of the associated operator $T$
and the asymptotics of the number of permutations
not having any isolated ascents or descents is
given by
$$   \alpha_{n}/n!
   =
     c \cdot \lambda_{0}^{n-3} + O(r^{n})   ,  $$
where $c$ and $r$ are two positive constants
such that $r < \lambda_{0}$.
\label{proposition_aba_bab}
\end{proposition}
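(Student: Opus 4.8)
The plan is to run the argument of Proposition~\ref{proposition_aaa_bbb} verbatim, with the data attached to the forbidden set $\{\av\bv\av,\bv\av\bv\}$ replacing that of $\{\av\av\av,\bv\bv\bv\}$. First I would confirm the two matrices: forbidding the descent words $\av\bv\av$ and $\bv\av\bv$ means setting $\wt=0$ on these two of the eight $\ab$-words of length~$3$ and $\wt=1$ on the remaining six, and feeding this into the rule $A_{uy,\av u}=\wt(\av u y)$, $B_{uy,\bv u}=\wt(\bv u y)$ produces precisely the $4\times 4$ matrices $A$ and $B$ displayed above. A short computation then shows $A-B$ has characteristic polynomial $\lambda^{4}-\lambda^{2}-1$, so it is non-singular and diagonalizable with eigenvalues $\pm\tau$ and $\pm\sigma\cdot i$. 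Since $A-B$ is invertible, the remark following Theorem~\ref{theorem_operator} lets me replace the condition $\det(P)=0$ by $\det\bigl(-A+B\cdot e^{(A-B)/\lambda}\bigr)=0$; diagonalizing $(A-B)/\lambda$, expanding this $4\times 4$ determinant with a computer algebra system, and clearing a common scalar factor should yield exactly $8$ plus the right-hand side of~\eqref{equation_aba_bab}. This identifies the non-zero spectrum of $T$ with the non-zero roots of~\eqref{equation_aba_bab}, and also accounts for the close resemblance between equations~\eqref{equation_aba_bab} and~\eqref{equation_aaa_bbb}.

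Next I would show that the largest real positive root $\lambda_{0}$ is the eigenvalue of largest modulus and that it is simple. Both are instances of the Perron--Frobenius theory of~\cite{Ehrenborg_Kitaev_Perry}: since the weights lie in $\{0,1\}$ the operator $T$ preserves the cone of nonnegative functions, and the primitivity hypothesis one needs is that the de Bruijn graph on the four vertices $\{\av,\bv\}^{2}$, with the two edges $\av\bv\stackrel{\av\bv\av}{\longrightarrow}\bv\av$ and $\bv\av\stackrel{\bv\av\bv}{\longrightarrow}\av\bv$ deleted, is ergodic. I would verify this by exhibiting, among the surviving edges, the $4$-cycle $\av\av\to\av\bv\to\bv\bv\to\bv\av\to\av\av$ (strong connectivity) together with the loops $\av\av\to\av\av$ and $\bv\bv\to\bv\bv$ (aperiodicity). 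Combining Theorems~1.7 and~4.2 of~\cite{Ehrenborg_Kitaev_Perry} then gives that $T$ has a unique eigenvalue of maximal modulus, that it is real, positive, simple, and equal to $\lambda_{0}$, and that it admits a strictly positive eigenfunction~$\varphi$. As a cross-check, simplicity could instead be read off from criterion~\eqref{equation_c_c} of Theorem~\ref{theorem_simple}.

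Finally I would invoke Theorem~\ref{theorem_expansion}. There are no initial or final restrictions, so $\wt_{1}=\wt_{2}\equiv 1$ and $\kappa=\mu=\mathbf 1$; choosing $r$ strictly between $\lambda_{0}$ and the modulus of the next-largest eigenvalue, Theorem~\ref{theorem_expansion} gives $\alpha_{n}/n!=c\cdot\lambda_{0}^{n-3}+O(r^{n})$ with $c=\pair{\varphi}{\mathbf 1}\cdot\pair{\mathbf 1}{\overline{\psi}}/\pair{\varphi}{\overline{\psi}}$, where $\psi=J\varphi$ is the adjoint eigenfunction furnished by Lemma~\ref{lemma_J}; the lemma applies because the set $\{\av\bv\av,\bv\av\bv\}$ is closed under the reverse-complement involution, so $\wt$ has the required symmetry. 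Strict positivity of $\varphi$ (hence of $\psi$) forces $c>0$, which completes the proof. Conceptually there is no obstacle here, since the argument tracks Section~\ref{subsection_aaa_bbb} step for step; the only genuine labor is the determinant expansion producing~\eqref{equation_aba_bab}, and the one place needing care is lining that expansion up with the stated four-term form.
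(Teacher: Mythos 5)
Your proposal is correct and follows the same route as the paper, which simply says the argument of Proposition~\ref{proposition_aaa_bbb} carries over with the de Bruijn graph edges $\av\bv\stackrel{\av\bv\av}{\longrightarrow}\bv\av$ and $\bv\av\stackrel{\bv\av\bv}{\longrightarrow}\av\bv$ removed; your ergodicity check (the surviving $4$-cycle plus the loops at $\av\av$ and $\bv\bv$), the computation of the spectrum of $A-B$ via $\lambda^{4}-\lambda^{2}-1$, and the appeal to Theorems~1.7 and~4.2 of~\cite{Ehrenborg_Kitaev_Perry} together with Theorem~\ref{theorem_expansion} are exactly the intended details.
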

The same argument as in
Proposition~\ref{proposition_aaa_bbb}
yields that the largest eigenvalue $\lambda$
is simple.
The only difference is that we consider the
de Bruijn graph with the two edges
$\av\bv \stackrel{\av\bv\av}{\longrightarrow} \bv\av$
and
$\bv\av \stackrel{\bv\av\bv}{\longrightarrow} \av\bv$
removed.

Numerically, we find the following three largest roots
to equation~\eqref{equation_aba_bab}:
\begin{eqnarray*}
 \lambda_{0}
  & = &
 0.6869765032\ldots , \\
 \lambda_{1,2}
  & = &
 0.1559951131\ldots \pm 0.5317098371\ldots \cdot i .
\end{eqnarray*}
The next largest root $\lambda_{1,2}$ bounds
$r$ from below by
$|\lambda_{1,2}| = 0.5541207686\ldots$.

Similar to Subsection~\ref{subsection_aaa_bbb}
we can obtain the numerical asymptotic
expression for the quantity~$\alpha_{n}$.
The numerical data is as follows:
$$   \vec{c}
   =
     \begin{pmatrix}
             0.4315640876\ldots \\
             0                  \\
             0.6378684967\ldots \\
             0.6378684967\ldots
     \end{pmatrix}   ,  $$
and
\begin{eqnarray*}
\pair{\varphi}{{\mathbf 1}}
    =
\pair{{\mathbf 1}}{\overline{\psi}}
  & = &
0.2798342976\ldots ,
\\
\pair{\varphi}{\overline{\psi}}
  & = &
0.0878970625\ldots .
\end{eqnarray*}
Combining this we have the constant
$$ 
\frac{\pair{\varphi}{{\mathbf 1}} \cdot
      \pair{{\mathbf 1}}{\overline{\psi}}}
     {\pair{\varphi}{\overline{\psi}}} 
   =
0.8908970548\ldots . $$
Finally, we conclude that the asymptotics for
the number of permutations with no isolated
ascents and no isolated descents is given by
$$    0.8908970548\ldots 
         \cdot
(0.6869765032\ldots)^{n-3}
         \cdot n!   . $$

\section{A weighted example of length three}
\label{section_2_1_1_0}

\newcommand{\waa}{\wt(\av\av)}
\newcommand{\wab}{\wt(\av\bv)}
\newcommand{\wba}{\wt(\bv\av)}
\newcommand{\wbb}{\wt(\bv\bv)}

Define a weight function on the set of $\ab$-words of length $2$
such that $\waa = 0$, $\wbb = 2$ and
$\wab = \wba = 1$
and the initial and final weight functions $\wt_{1}$ and $\wt_{2}$
are identical to $1$.
We are interested in understanding the sum
$$  \alpha_{n} 
       =
    \sum_{\pi \in \SSS_{n}} \Wt(\pi)  . $$
A more explicit way to write this sum is as follows
$$  \alpha_{n} 
       =
    \sum_{\pi}  2^{\bv\bv(\pi)}  , $$
where the sum is over all $123$-avoiding permutations of
length $n$
and
$\bv\bv(\pi)$ denotes the number of
double descents of $\pi$.

Let us refine the number $\alpha_{n}$ by
considering if the permutation begins with
an ascent or a descent, and similarly how
the permutation ends, that is,
we define 
$\alpha_{n}(\av,\av)$,
$\alpha_{n}(\av,\bv)$,
$\alpha_{n}(\bv,\av)$ and
$\alpha_{n}(\bv,\bv)$ for $n \geq 2$ by
$$  \alpha_{n}(x,y)
       =
    \sum \Wt(\pi)  , $$
where the sum is over all permutations
$\pi$ in $\SSS_{n}$ whose descent word $u(\pi)$
begins with the letter $x$ and ends with the letter $y$.
Note that $\alpha_{2}(x,y)$ is given by
the Kronecker delta $\delta_{x,y}$.
These quantities can also be expressed by changing the initial
and final weight functions.

By the symmetry
$\pi_{1}, \pi_{2}, \ldots, \pi_{n}
   \longmapsto
n+1-\pi_{n}, \ldots, n+1-\pi_{2}, n+1-\pi_{1}$
we have that
$\alpha_{n}(\av,\bv) = \alpha_{n}(\bv,\av)$.

First we consider the spectrum of the
associated operator.
\begin{theorem}
The only non-zero eigenvalue of the operator $T$
is $\lambda = 1$.
This is a simple eigenvalue. Furthermore,
the eigenfunction $\varphi$ and the adjoint eigenfunction $\psi$
associated with this eigenvalue are given by
$$  \varphi
  =
    e^{-x}
      \cdot
    \left\{
    \begin{array}{c l}
       1-x & \text{ if } 0 \leq x \leq y \leq 1 , \\
       2-x & \text{ if } 0 \leq y \leq x \leq 1 ,
    \end{array}
    \right.
\:\:\:\: \text{ and } \:\:\:\:
    \psi
  =
    e^{y-1}
      \cdot 
    \left\{
    \begin{array}{c l}
        y  & \text{ if } 0 \leq x \leq y \leq 1 , \\
       y+1 & \text{ if } 0 \leq y \leq x \leq 1 .
    \end{array}
    \right.    $$
\label{theorem_1}
\end{theorem}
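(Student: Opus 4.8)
The plan is to specialize Theorem~\ref{theorem_operator} and Theorem~\ref{theorem_simple} to the two $2 \times 2$ matrices produced by our weight function. Since $m = 2$, the matrices $A$ and $B$ are indexed by the two $\ab$-words of length $m-1 = 1$, and the recipe $A_{uy,\av u} = \wt(\av u y)$, $B_{uy,\bv u} = \wt(\bv u y)$ with $u$ the empty word gives
$$
  A = \begin{pmatrix} 0 & 0 \\ 1 & 0 \end{pmatrix}
  \qquad \text{and} \qquad
  B = \begin{pmatrix} 0 & 1 \\ 0 & 2 \end{pmatrix}  .
$$
By Corollary~\ref{corollary_x_1} and the discussion following it, it then suffices to analyze the operator~\eqref{equation_operator_on_matrix_form} for this pair $(A,B)$ on the subspace $V$.

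The key structural fact is that $A - B$ has characteristic polynomial $(t+1)^{2}$ and is \emph{not} diagonalizable: setting $N = A - B + I$ one checks $N^{2} = 0$, so $A - B = -I + N$ with $N$ nilpotent. Hence the diagonalizable shortcut recorded after Theorem~\ref{theorem_operator} is unavailable, and I would instead evaluate $\gamma\bigl((A-B)/\lambda\bigr)$ directly, using $e^{(A-B)t/\lambda} = e^{-t/\lambda}\bigl(I + tN/\lambda\bigr)$ together with $\int_{0}^{1} e^{-t/\lambda}\,dt = \lambda(1 - e^{-1/\lambda})$ and $\int_{0}^{1} t\,e^{-t/\lambda}\,dt = \lambda^{2} - \lambda(\lambda+1)e^{-1/\lambda}$. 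This gives $\gamma\bigl((A-B)/\lambda\bigr) = \alpha \cdot I + \beta \cdot N$ with $\alpha = \lambda(1 - e^{-1/\lambda})$ and $\beta = \lambda - (\lambda+1)e^{-1/\lambda}$. Substituting into $P = -\lambda I + B\,\gamma\bigl((A-B)/\lambda\bigr)$, using $N^{2} = 0$, and simplifying via the identity $\beta - 2\alpha + \lambda = (\lambda - 1)e^{-1/\lambda}$, the determinant collapses to $\det(P) = \lambda(\lambda - 1)e^{-1/\lambda}$. As $e^{-1/\lambda}$ never vanishes, $\lambda = 1$ is the only nonzero root, so by Corollary~\ref{corollary_x_1} the nonzero spectrum of $T$ is $\{1\}$; by Theorem~\ref{theorem_expansion} this is a discrete eigenvalue of finite multiplicity, and it remains to pin down the eigenfunctions and prove simplicity.

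For the eigenfunction I would solve $P\vec{c} = 0$ at $\lambda = 1$, obtaining $\vec{c} = \bigl(\begin{smallmatrix} 1 \\ 2 \end{smallmatrix}\bigr)$, so that Theorem~\ref{theorem_operator} yields $\vec{p}(x) = e^{(A-B)x}\vec{c} = e^{-x}(I + xN)\vec{c} = e^{-x}\bigl(\begin{smallmatrix} 1-x \\ 2-x \end{smallmatrix}\bigr)$; reading the two coordinates on the descent polytopes $P_{\av} = \{x_{1} \le x_{2}\}$ and $P_{\bv} = \{x_{1} \ge x_{2}\}$ gives precisely the stated $\varphi$. For the adjoint eigenfunction one cannot invoke Lemma~\ref{lemma_J}, since its symmetry hypothesis would force $\wt(\av\av) = \wt(\bv\bv)$, which is false here. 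Instead I would use the explicit formula for $T^{*}$ from Section~2: restricting $T^{*}$ to the subspace of functions whose restriction to each descent polytope depends only on the \emph{last} variable $x_{2}$, a short computation shows that $T^{*}$ there is again of the form~\eqref{equation_operator_on_matrix_form} but with the roles of $A$ and $B$ interchanged. Repeating the argument of Theorem~\ref{theorem_operator} with $(A,B)$ replaced by $(B,A)$ — here $B - A = I - N$ is again a nilpotent perturbation of the identity — and normalizing the resulting null vector produces $e^{y-1}\bigl(\begin{smallmatrix} y \\ y+1 \end{smallmatrix}\bigr)$, which is the stated $\psi$. As a check one can also verify directly that these $\varphi$ and $\psi$ satisfy $T(\varphi) = \varphi$ and $T^{*}(\psi) = \psi$.

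Finally, simplicity follows from Theorem~\ref{theorem_simple}: with $\lambda = 1$ and $\vec{p}(0) = \vec{c} = \bigl(\begin{smallmatrix} 1 \\ 2 \end{smallmatrix}\bigr)$ one has $e^{A-B}\vec{c} = e^{-1}(I + N)\vec{c} = e^{-1}\bigl(\begin{smallmatrix} 0 \\ 1 \end{smallmatrix}\bigr)$, hence $B\,e^{A-B}\vec{p}(0) = e^{-1}\bigl(\begin{smallmatrix} 1 \\ 2 \end{smallmatrix}\bigr) \neq 0$, so condition~\eqref{equation_c_c} holds; alternatively, simplicity of the top eigenvalue can be deduced, exactly as in Proposition~\ref{proposition_aaa_bbb}, from ergodicity of the de Bruijn graph obtained by deleting the single zero-weight edge labeled $\av\av$. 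I expect the main obstacle to be bookkeeping rather than anything conceptual: the non-diagonalizability of $A - B$ forces the nilpotent version of the computations of $\gamma$ and of $\det(P)$, and the absence of the reflection symmetry means the adjoint eigenfunction has to be built by hand instead of read off from Lemma~\ref{lemma_J}.
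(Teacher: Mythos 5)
Your computation of the spectrum, the vector $\vec{c}$, the eigenfunction $\varphi$, and the simplicity check via Theorem~\ref{theorem_simple} all match the paper's proof, and your explicit evaluation of $\gamma((A-B)/\lambda)$ through the nilpotent decomposition $A-B=-I+N$ correctly reproduces $\det(P)=\lambda(\lambda-1)e^{-1/\lambda}$. The one genuine error is your claim that Lemma~\ref{lemma_J} is inapplicable because its hypothesis ``would force $\wt(\av\av)=\wt(\bv\bv)$.'' The symmetry in Lemma~\ref{lemma_J} is the reverse--complement $\sigma\mapsto(m+2-\sigma_{m+1},\ldots,m+2-\sigma_{1})$, and on descent words this operation acts by \emph{reversal} (not complementation): it fixes $\av\av$ and $\bv\bv$ and swaps $\av\bv\leftrightarrow\bv\av$. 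Since $\wt(\av\bv)=\wt(\bv\av)=1$ here, the hypothesis is satisfied, and the paper obtains $\psi$ in one line as $J(\varphi)$ --- which indeed equals your stated $e^{y-1}(y,\,y+1)$ after the substitution $(x,y)\mapsto(1-y,1-x)$.

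Your workaround is nonetheless valid: restricting $T^{*}$ to functions depending only on the last variable on each descent polytope does give the operator of form~\eqref{equation_operator_on_matrix_form} with $A$ and $B$ interchanged, and solving that eigenvalue problem at $\lambda=1$ produces the correct $\psi$ up to normalization. So your proof goes through; it simply replaces a one-line appeal to Lemma~\ref{lemma_J} with a second full eigenfunction computation. One secondary caution: your alternative simplicity argument via ergodicity of the de Bruijn graph, borrowed from Proposition~\ref{proposition_aaa_bbb}, is stated in \cite{Ehrenborg_Kitaev_Perry} for $0,1$-valued (avoidance) weights, whereas here $\wt(\bv\bv)=2$; the argument via Theorem~\ref{theorem_simple}, which you also give and which the paper uses, is the safe one.
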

\begin{proof}
The associated operator $T$ can be written in
the form~\eqref{equation_operator_on_matrix_form}
using the matrices
$$   A
   = 
     \begin{pmatrix}
       0 & 0 \\
       1 & 0
     \end{pmatrix}
              \:\: \text{ and } \:\:
     B
   = 
     \begin{pmatrix}
       0 & 1 \\
       0 & 2
     \end{pmatrix}    .    $$
Note that $A-B$ has eigenvalue $-1$ of algebraic
multiplicity $2$, but geometric multiplicity $1$,
that is, the Jordan form of $A-B$ consists of
one Jordan block of size $2$.
Computing the matrix $P$ we obtain
$$ 0 = \det(P)
     = \exp\left(-1/\lambda\right)
         \cdot
       \lambda
         \cdot
       \left( \lambda - 1 \right) , $$
which only has the non-zero
root $\lambda = 1$.
Furthermore for this root,
the null space of the matrix $P$ is
spanned by the vector
$$  \vec{c} = \begin{pmatrix} 1 \\ 2 \end{pmatrix} . $$
Finally, it is straightforward to verify
$B \cdot e^{M} \cdot \vec{c} \neq \vec{0}$,
hence $\lambda = 1$ is a simple eigenvalue
by
Theorem~\ref{theorem_simple}.
Moreover the eigenfunction $\varphi$ is given by
$$  \varphi
  =
    \exp\left(
    \begin{pmatrix}
       0 & -1 \\ 1 & -2
    \end{pmatrix}
           \cdot x
        \right)
      \cdot
    \begin{pmatrix} 1 \\ 2 \end{pmatrix} 
  =
    e^{-x}
      \cdot
    \begin{pmatrix}
       1-x \\ 2-x
    \end{pmatrix}     . $$
Since the weight function $\wt$ satisfies 
the symmetry in Lemma~\ref{lemma_J},
we obtain that the adjoint eigenfunction is
given by $\psi = J(\varphi)$.
\end{proof}

\begin{theorem}
The asymptotics of the sequences
$\alpha_{n}(\av,\av)$, 
$\alpha_{n}(\av,\bv)$,
$\alpha_{n}(\bv,\bv)$ and
$\alpha_{n}$
are given by
$$  \begin{array}{r c l}
\alpha_{n}(\av,\av)/n!
  & = &
e - 4 + 4/e       +  O(r^{n})  , \\
\alpha_{n}(\av,\bv)/n!
  & = &
1 - 2/e           +  O(r^{n})  , \\
\alpha_{n}(\bv,\bv)/n!
  & = &
1/e               +  O(r^{n})  , \\
\alpha_{n}/n!
  & = &
e - 2 + 1/e       +  O(r^{n})  ,
\end{array} $$
where $r$ is an arbitrary small positive real number.
\end{theorem}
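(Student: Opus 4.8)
The plan is to obtain all four asymptotic formulas from the spectral expansion of Theorem~\ref{theorem_expansion}, applied to the operator $T$ of this section (so $m=2$). By Theorem~\ref{theorem_1} the only non-zero eigenvalue of $T$ is $\lambda=1$, it is simple, and both its eigenfunction $\varphi$ and the adjoint eigenfunction $\psi=J\varphi$ are given explicitly. Since the non-zero spectrum of $T$ is discrete and can accumulate only at $0$, for every $r$ with $0<r<1$ there is no eigenvalue of modulus $r$; hence Theorem~\ref{theorem_expansion} applies with $k=1$ and $\lambda_{1}=1$, and, because $\lambda_{1}^{\,n-2}=1$, it presents each of our quantities as a constant plus an error term $O(r^{n})$ with $r$ an arbitrarily small positive number.

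Next I would pin down the boundary data. Let $P_{\av}$ and $P_{\bv}$ be the two descent polytopes (triangles) in the unit square $[0,1]^{2}$. The refined count $\alpha_{n}(x,y)$ is the one produced by taking the initial weight function $\wt_{1}$ on one-letter $\ab$-words to be the indicator of $x$ and the final weight function $\wt_{2}$ to be the indicator of $y$; the associated functions $\kappa$ and $\mu$ on the unit square are then ${\mathbf 1}_{P_{x}}$ and ${\mathbf 1}_{P_{y}}$, whereas $\alpha_{n}$ itself corresponds to $\kappa=\mu={\mathbf 1}$. All of these functions are real-valued, and the involution $J$ of Lemma~\ref{lemma_J} fixes each of ${\mathbf 1}_{P_{\av}}$, ${\mathbf 1}_{P_{\bv}}$ and ${\mathbf 1}$ (for $m=2$ it preserves each of the two triangles); hence Lemma~\ref{lemma_J} gives $\pair{\kappa}{\overline{\psi}}=\pair{\varphi}{J\kappa}=\pair{\varphi}{\kappa}$. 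Substituting into Theorem~\ref{theorem_expansion} collapses the expansion to
$$
\alpha_{n}(x,y)/n!
   =
\frac{\pair{\varphi}{{\mathbf 1}_{P_{y}}}\cdot\pair{\varphi}{{\mathbf 1}_{P_{x}}}}
     {\pair{\varphi}{\overline{\psi}}}
   +
O(r^{n}) ,
$$
and likewise $\alpha_{n}/n!=\pair{\varphi}{{\mathbf 1}}^{2}/\pair{\varphi}{\overline{\psi}}+O(r^{n})$; this last identity also follows from $\alpha_{n}=\alpha_{n}(\av,\av)+2\,\alpha_{n}(\av,\bv)+\alpha_{n}(\bv,\bv)$ together with the symmetry $\alpha_{n}(\av,\bv)=\alpha_{n}(\bv,\av)$, which in turn is visible from the displayed formula being symmetric in $x$ and $y$.

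It then remains to evaluate three elementary integrals, using $\varphi=e^{-x}(1-x)$ on $\{x\le y\}$ and $\varphi=e^{-x}(2-x)$ on $\{y\le x\}$ from Theorem~\ref{theorem_1}, together with the matching formula for $\psi$. Integrating over the triangles gives $\pair{\varphi}{{\mathbf 1}_{P_{\av}}}=\int_{0}^{1}(1-x)^{2}e^{-x}\,dx=1-2/e$ and $\pair{\varphi}{{\mathbf 1}_{P_{\bv}}}=\int_{0}^{1}x(2-x)e^{-x}\,dx=1/e$, so $\pair{\varphi}{{\mathbf 1}}=1-1/e$; and a double integration of $\varphi\psi$ over the two triangles --- short because $\int(1-x)e^{-x}\,dx=xe^{-x}$ and $\int(y+1)e^{y}\,dy=ye^{y}$ --- gives $\pair{\varphi}{\overline{\psi}}=1/e$. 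Feeding these four numbers into the formulas above yields $\alpha_{n}(\av,\av)/n!=e(1-2/e)^{2}+O(r^{n})=e-4+4/e+O(r^{n})$, $\alpha_{n}(\av,\bv)/n!=(1-2/e)+O(r^{n})$, $\alpha_{n}(\bv,\bv)/n!=1/e+O(r^{n})$, and $\alpha_{n}/n!=e(1-1/e)^{2}+O(r^{n})=e-2+1/e+O(r^{n})$, as claimed.

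The computational heart is these integrals, but I do not expect a genuine obstacle: simplicity of $\lambda=1$, the explicit eigenfunctions, the symmetry of Lemma~\ref{lemma_J}, and the expansion of Theorem~\ref{theorem_expansion} are all already in place. The step needing the most care is the bookkeeping that identifies $\kappa$ and $\mu$ with the correct triangle indicators and checks that $J$ fixes them; that is precisely what converts the adjoint pairing $\pair{\kappa}{\overline{\psi}}$ into the eigenfunction pairing $\pair{\varphi}{\kappa}$ and lets the whole calculation be carried out with $\varphi$ alone.
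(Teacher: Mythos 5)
Your proposal is correct and follows essentially the same route as the paper: apply Theorem~\ref{theorem_expansion} with the single simple eigenvalue $\lambda=1$ from Theorem~\ref{theorem_1}, take $\kappa,\mu$ to be the indicators of ascent/descent (the two triangles), use Lemma~\ref{lemma_J} to replace the adjoint pairings by pairings with $\varphi$, and evaluate the same inner products $1-2/e$, $1/e$, $1/e$. The only cosmetic difference is that you also compute $\alpha_{n}$ directly from $\kappa=\mu={\mathbf 1}$, while the paper just sums the four refined asymptotics; both give $e-2+1/e$.
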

\begin{proof}
Let ${\mathbf 1}_{\av}$ denote the function encoding an ascent,
that is,
${\mathbf 1}_{\av}(x,y) = 1$ if $x < y$ and $0$ otherwise.
Similarly, let
${\mathbf 1}_{\bv}$ be the function encoding a descent,
that is,
${\mathbf 1}_{\bv}(x,y) = 1$ if $x > y$ and $0$ otherwise.
Note that we have that
$J {\mathbf 1}_{\av} = {\mathbf 1}_{\av}$
and
$J {\mathbf 1}_{\bv} = {\mathbf 1}_{\bv}$.
By letting the initial function $\kappa$
and the final function $\mu$ vary over
the two functions
${\mathbf 1}_{\av}$ and ${\mathbf 1}_{\bv}$,
we obtain the constant term 
in the asymptotic expression
in Theorem~\ref{theorem_expansion}.
First we compute the inner products
\begin{eqnarray*}
\pair{\varphi}{{\mathbf 1}_{\av}}
    =
\pair{{\mathbf 1}_{\av}}{\overline{\psi}}
  & = &
1 - 2/e , \\
\pair{\varphi}{{\mathbf 1}_{\bv}}
    =
\pair{{\mathbf 1}_{\bv}}{\overline{\psi}}
  & = &
1/e , \\
\pair{\varphi}{\overline{\psi}}
  & = &
1/e ,
\end{eqnarray*}
where we used Lemma~\ref{lemma_J}
for two of the five equalities.
Hence the constants are:
\begin{eqnarray*}
\frac{\pair{\varphi}{{\mathbf 1}_{\av}} \cdot
      \pair{{\mathbf 1}_{\av}}{\overline{\psi}}}
     {\pair{\varphi}{\overline{\psi}}} 
  & = &
     e - 4 + 4/e   , \\
\frac{\pair{\varphi}{{\mathbf 1}_{\bv}} \cdot
      \pair{{\mathbf 1}_{\av}}{\overline{\psi}}}
     {\pair{\varphi}{\overline{\psi}}} 
  & = &
     1 - 2/e   , \\
\frac{\pair{\varphi}{{\mathbf 1}_{\bv}} \cdot
      \pair{{\mathbf 1}_{\bv}}{\overline{\psi}}}
     {\pair{\varphi}{\overline{\psi}}} 
  & = &
     1/e   .
\end{eqnarray*}
This proves the three first results of the theorem.
The fourth result is obtained by adding the asymptotic
expressions for
$\alpha_{n}(\av,\av)$,
$\alpha_{n}(\av,\bv)$,
$\alpha_{n}(\bv,\av)$ and
$\alpha_{n}(\bv,\bv)$.
\end{proof}

In order to study these sequences further,
we introduce the associated exponential
generating functions.
Let $F_{x,y}(z)$ denote the generating function
$$  F_{x,y}(z)
  =
    \sum_{n \geq 2} \alpha_{n}(x,y) \cdot \frac{z^{n}}{n!}  . $$
Similarly, let $F(z)$ be the generating function
for the sequence $\alpha_{n}$.

\begin{proposition}
The generating function $F_{x,y}(z)$ satisfies
the following equation:
\begin{eqnarray}
       F_{x,y}(z)
  & = &
       \delta_{x,y} \cdot \frac{z^{2}}{2!}
    +
       \delta_{x,\bv} \cdot \delta_{y,\av} \cdot 2 \cdot \frac{z^{3}}{3!}
\nonumber \\ 
  & + &
       \int_{0}^{z}
             (F_{x,\av}(w) + 2 \cdot F_{x,\bv}(w))
           \cdot
             F_{\bv,y}(w)   \: dw
\nonumber \\ 
  & + &
       \delta_{x,\av}
          \cdot
       \int_{0}^{z}
             F_{\bv,y}(w)   \: dw
\nonumber \\ 
  & + &
       \delta_{x,\bv}
          \cdot
       \int_{0}^{z}
             w \cdot F_{\bv,y}(w)   \: dw
\nonumber \\ 
  & + &
       \delta_{y,\bv}
          \cdot
       \int_{0}^{z}
             (F_{x,\av}(w) + 2 \cdot F_{x,\bv}(w))  \: dw
\nonumber \\ 
  & + &
       \delta_{y,\av}
          \cdot
       \int_{0}^{z}
             (F_{x,\av}(w) + 2 \cdot F_{x,\bv}(w)) \cdot w   \: dw .
\label{equation_generating_functions}
\end{eqnarray}
\label{proposition_generating_functions}
\end{proposition}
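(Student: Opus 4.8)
The plan is to derive the integral equation for $F_{x,y}(z)$ combinatorially, by a first-return / insertion decomposition of a $123$-avoiding permutation according to the position of $n$ (its largest entry), and to keep track of the descent word and the $\bv\bv$-weight throughout. Recall that in a $123$-avoiding permutation the positions carrying a left-to-right structure force $n$ to split the permutation into a left block $\pi'$ on $\{1,\dots,k-1\}$-sized support (in relative order) sitting in positions $1,\dots,k-1$ and a right block $\pi''$ in positions $k+1,\dots,n$; moreover $123$-avoidance forces the left block itself to be $123$-avoiding and, crucially, the right block to be $123$-avoiding \emph{with every entry larger than something}, which is the standard device that makes the two blocks independent up to the descent letter created at the junctions $\pi_{k-1}, n$ and $n, \pi_{k+1}$. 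First I would set up this decomposition carefully, noting that the letter of $u(\pi)$ at position $k-1$ is an $\av$ (ascent into the maximum) and at position $k$ is a $\bv$ (descent out of the maximum), so the right block always begins with a $\bv$; this is the source of the $F_{\bv,y}$ factor appearing uniformly on the right-hand side.

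Next I would account for the weight. The weight $\Wt(\pi)$ is multiplicative over length-two windows of $u(\pi)$, so it factors as $\Wt(u(\pi')) \cdot \Wt(u(\pi'')) \cdot (\text{boundary corrections at the two windows straddling } n)$. The window ending at position $k-1$ contributes a factor depending on the last letter of $u(\pi')$ followed by $\av$: if that last letter is $\av$ we get $\wt(\av\av)=0$, killing the term, and if it is $\bv$ we get $\wt(\bv\av)=1$; this is exactly the origin of the combination $F_{x,\av} + 2\cdot F_{x,\bv}$ — wait, rather: the two windows overlapping the maximum are $(\pi_{k-1},n)$ with letter $\av$ and $(n,\pi_{k+1})$ with letter $\bv$; the first window has pattern (last letter of left word)$\cdot\av$, contributing $\wt(\av\av)=0$ or $\wt(\bv\av)=1$, and the window $(n,\pi_{k+1})$ has pattern $\bv\cdot$(first letter of right word). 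Combined with the window internal to the right block at its first position, one sees the left word must \emph{not} end in $\av$ unless we absorb the vanishing, and the factor $2$ attached to $F_{x,\bv}$ is the $\wt(\bv\bv)=2$ bonus; keeping this bookkeeping honest yields the kernel $(F_{x,\av}(w) + 2\,F_{x,\bv}(w))\cdot F_{\bv,y}(w)$ in the main convolution term, where $w$ marks the length of the left block and $z$ the total length, so the convolution is the $\int_0^z\cdots\,dw$. The three Kronecker-delta boundary integrals handle the degenerate cases where one of the two blocks is empty (so $n$ is in position $1$ or position $n$), in which case there is no internal window to supply a letter and the prescribed initial/final letter $x$ or $y$ must be supplied by hand, with the extra factor $w$ appearing when the nonempty block must additionally record a forced ascent/descent at its far boundary. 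The two explicit low-order terms $\delta_{x,y}z^2/2! + \delta_{x,\bv}\delta_{y,\av}\cdot 2\cdot z^3/3!$ are just the base cases $n=2$ (where $\alpha_2(x,y)=\delta_{x,y}$) and the single $n=3$ contribution not already produced by the recursion, namely the permutation $132$ with descent word $\av\bv$ — again, rather the word-pattern giving $\bv\av$ as first and last letters, weighted by $\wt(\bv\bv)$; I would verify the coefficient $2$ directly.

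The step I expect to be the main obstacle is proving that the left and right blocks are genuinely \emph{independent} enumeratively — i.e.\ that the $123$-avoiding permutations decompose as a product and that no global constraint couples the two sides beyond the single forced descent letter out of $n$. The clean way to see this is the classical bijection: in a $123$-avoiding permutation, the entries to the \emph{right} of $n$ must all be smaller than every entry to the left of $n$ that is not itself a right-to-left... more precisely, one uses that a $123$-avoiding permutation is determined by its sequence of left-to-right minima and the complementary decreasing-type structure; once $n$ is removed, both pieces are order-isomorphic to arbitrary $123$-avoiding permutations of the appropriate sizes, and the descent word of $\pi$ is the concatenation of the two descent words with the inserted $\av\bv$ at the junction. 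I would state this as a lemma (or cite it, as it underlies the Catalan count), then the proposition follows by translating "concatenation of descent words, multiply weights, sum over the split point and over $x',y'$" into the exponential-generating-function identity, where convolution of EGFs corresponds to the $\int_0^z dw$ because splitting an $n$-set into an ordered pair of blocks of sizes $k-1$ and $n-k$ introduces the binomial $\binom{n-1}{k-1}$ — the "$-1$" being the slot used by $n$ itself, which is exactly why the right-hand side is an \emph{integral} $\int_0^z$ rather than a plain product (integration drops the degree by one, matching the one position consumed by the maximum). Once the independence lemma is in hand, the rest is a routine but careful matching of terms, which I would present compactly rather than exhaustively.
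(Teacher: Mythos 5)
There is a genuine gap, and it sits exactly at the junction windows. The paper splits the permutation at the \emph{minimum} element $1$; you split at the maximum $n$. That is not a cosmetic difference: with $1$ at the split, the two junction letters of the descent word are $\bv$ then $\av$, so the window $(\text{last letter of the left block})\cdot\bv$ contributes $\wt(\av\bv)=1$ or $\wt(\bv\bv)=2$ (hence the factor $F_{x,\av}+2\cdot F_{x,\bv}$ attached to the \emph{left} block), while the window $\av\cdot(\text{first letter of the right block})$ contributes $\wt(\av\av)=0$ or $\wt(\av\bv)=1$ (hence the right block is forced to begin with a descent, giving $F_{\bv,y}$). With $n$ at the split the junction letters are $\av$ then $\bv$, and the same bookkeeping, done honestly, forces the \emph{left} block to end with a descent and puts the weight-$2$ bonus on the \emph{right} block, yielding the kernel $F_{x,\bv}(w)\cdot(F_{\av,y}(w)+2\cdot F_{\bv,y}(w))$ and, e.g., the length-$3$ seed $\delta_{x,\av}\cdot\delta_{y,\bv}\cdot 2\cdot z^{3}/3!$ (from $132$ and $231$). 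That is the reverse of the stated equation, not the stated equation; since the claimed identity is not symmetric under $x\leftrightarrow y$ together with reading words backwards term by term, you cannot simply assert the displayed kernel. Your write-up visibly wobbles at precisely these points (``wait, rather\ldots''), and the claim that ``the right block always begins with a $\bv$'' because there is a descent out of the maximum conflates the junction letter of $u(\pi)$ with the first letter of the right block's own descent word. To fix the argument, either split at the element $1$ (as the paper does), or carry out the maximum-splitting carefully and then apply the weight-preserving symmetry $\pi\mapsto (n+1-\pi_{n},\ldots,n+1-\pi_{1})$ to convert the resulting equation into the stated one.

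A second, smaller problem: the ``independence lemma'' you flag as the main obstacle is a red herring coming from classical $123$-avoidance. Here $123$ is avoided \emph{consecutively} (equivalently, $\wt(\av\av)=0$ kills any permutation with a double ascent), so there is no global constraint coupling the two blocks: the values are distributed arbitrarily between them ($\binom{n-1}{k}$ ways, which is exactly the EGF product followed by $\int_{0}^{z}\cdots\,dw$ to account for the slot occupied by the extremal element), the descent word of $\pi$ is the concatenation $u(\sigma)\cdot\bv\av\cdot u(\tau)$, and all avoidance conditions are already encoded in the weights carried by $F_{x,y}$. The structure theory of classically $123$-avoiding permutations (left-to-right minima, etc.) that you propose to cite is neither needed nor correct in this setting.
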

\begin{proof}
We demonstrate that all the terms on the right-hand side
are in fact counting permutations.
The first term
corresponds to permutations of length $2$.
The second term corresponds to permutations
of length $3$ with the element $1$ in the middle position,
that is, the two permutations $213$ and~$312$.

For the remaining permutations we break a permutation
at the position where the element~$1$ occurs. We obtain
two smaller permutations $\sigma$ and $\tau$ of lengths $k$,
respectively, $r$, where $k+r=n-1$.
The elements are distributed in $\binom{n-1}{k}$ ways
between these two permutations. This is encoded by
multiplication of exponential generating functions.
Finally, the integral shifts the coefficient
from $w^{n-1}/(n-1)!$ to~$z^{n}/n!$.

We continue to describe the terms.
The third term corresponds to $2 \leq k,r$,
that is,
at least two elements precede the element $1$
and
at least two elements follow the element $1$.
Note that $\tau$ must begin with a descent to
avoid creating a double ascent.
Also when $\sigma$ ends with a descent, we create
a double descent when concatenating $\sigma$
with the element $1$. This explains the factor $2$
in front of the term $F_{x,b}$.

The fourth term corresponds to $k=0$ and $r \geq 2$.
The Kronecker delta states that the permutation starts with an ascent.
The fifth term corresponds to $k=1$ and $r \geq 2$, in which
the permutation starts with a consecutive descent and ascent.
Similarly, the sixth and seventh terms correspond
to the two cases $r = 0$ and $k \geq 2$,
respectively, $r = 1$ and $k \geq 2$.

Since each permutation has been accounted for,
the equality holds.
\end{proof}

Note that Proposition~\ref{proposition_generating_functions}
is similar in spirit to the equations obtained
by Elizalde and Noy~\cite{Elizalde_Noy}
for the generating functions for certain
classes of pattern avoidance permutations.

\begin{theorem}
The generating functions $F_{x,y}(z)$ and $F(z)$ are given
by
\begin{eqnarray*}
F_{\av,\av}(z)
  & = &
\frac{1}{1-z} \cdot \left(e^{z} - 4 + 4 \cdot e^{-z}\right)
 - 1 + 2 \cdot z , \\
F_{\av,\bv}(z)
  & = &
\frac{1}{1-z} \cdot \left(1 - 2 \cdot e^{-z}\right)
 + 1 - z , \\
F_{\bv,\bv}(z)
  & = &
\frac{1}{1-z} \cdot e^{-z} - 1 , \\
F(z)
  & = &
\frac{1}{1-z} \cdot \left(e^{z} - 2 + e^{-z}\right) .
\end{eqnarray*}
\label{theorem_generating_functions}
\end{theorem}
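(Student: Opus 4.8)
\medskip
\noindent\textbf{Proof strategy.}
The plan is to turn the integral system of Proposition~\ref{proposition_generating_functions} into a first order system of ordinary differential equations, decouple it by a change of variables, and integrate. Throughout write $P = F_{\av,\av}$, $Q = F_{\av,\bv} = F_{\bv,\av}$ and $R = F_{\bv,\bv}$, so that $F = P + 2Q + R$. Differentiating equation~\eqref{equation_generating_functions} with respect to $z$ (recall $\frac{d}{dz}\int_{0}^{z}h(w)\,dw = h(z)$), and abbreviating $G = F_{\av,\av} + 2F_{\av,\bv} = P + 2Q$ and $H = F_{\bv,\av} + 2F_{\bv,\bv} = Q + 2R$ for the two specializations of the combination $F_{x,\av} + 2F_{x,\bv}$ that recurs in~\eqref{equation_generating_functions}, the four choices of $(x,y)$ yield
\[
\begin{aligned}
  P' &= z + Q + zG + GQ, &\quad Q' &= G + R + GR, \\
  Q' &= z^{2} + zQ + zH + HQ, &\quad R' &= z + H + zR + HR.
\end{aligned}
\]
Setting $z=0$ in~\eqref{equation_generating_functions} gives the initial conditions $P(0) = Q(0) = R(0) = 0$; since the system is first order, these conditions determine a unique (formal power series, hence analytic) solution, so it suffices to solve the system. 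Note that the two formulas for $Q'$ must agree, which is exactly the combinatorial symmetry $F_{\av,\bv} = F_{\bv,\av}$; either may be used below.

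The crucial point is that $H$ satisfies a \emph{closed} equation. Adding $2R'$ to the second expression for $Q'$ and collecting terms gives $H' = Q' + 2R' = (H+z)^{2} + 2(H+z)$, and the substitution $K = H + z$ converts this into the autonomous separable Riccati equation $K' = (K+1)^{2}$ with $K(0) = 0$. Integrating yields $K = z/(1-z)$, hence $H = z^{2}/(1-z)$.

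Once $H$ is known explicitly, the two remaining equations become \emph{linear}. Using the identity $z + H = z/(1-z)$, the equation for $R$ reads $R' = \frac{z}{1-z}\,(1+R)$; the integrating factor $(1-z)e^{z}$ reduces it to $\bigl((1-z)e^{z}R\bigr)' = ze^{z}$, and $R(0)=0$ gives $R = e^{-z}/(1-z) - 1$. Combining the first two equations gives $G' = (z+H) + (z+2+H)\,G = \frac{z}{1-z} + \frac{2-z}{1-z}\,G$; the integrating factor $(1-z)e^{-z}$ reduces it to $\bigl((1-z)e^{-z}G\bigr)' = ze^{-z}$, and $G(0)=0$ gives $G = (e^{z}-z-1)/(1-z)$. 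Finally one recovers the generating functions from $Q = H - 2R$, $P = G - 2Q$ and $F = P + 2Q + R = G + R$, and a short simplification puts each of $P$, $Q$, $R$, $F$ into the claimed closed form.

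The only step that is not a routine computation is guessing the change of variables that kills the quadratic cross-terms: first passing from the $F_{x,y}$ to $G$, $H$ (and $R$), then the shift $K = H+z$ that makes the $H$-equation separable. Everything afterwards is elementary integration. As an independent check one may instead verify directly that the stated formulas satisfy~\eqref{equation_generating_functions} and have vanishing constant term; by the uniqueness noted above, this already proves the theorem.
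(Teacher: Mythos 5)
Your proof is correct, but it takes a genuinely different route from the paper's. The paper's proof is essentially a two-line verification: since Proposition~\ref{proposition_generating_functions} can be read as a recursion uniquely determining the coefficients $\alpha_{n}(x,y)$, the system~\eqref{equation_generating_functions} has a unique power-series solution, and so it suffices to check that the stated closed forms satisfy it (the check itself is left to the reader). You instead \emph{derive} the closed forms: differentiating~\eqref{equation_generating_functions} yields a first-order system with quadratic nonlinearities, and your key observation that $H=F_{\bv,\av}+2F_{\bv,\bv}$ obeys the closed Riccati equation $H'=(H+z)^{2}+2(H+z)$, which the shift $K=H+z$ makes separable, decouples the system so that only two linear first-order equations remain. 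I verified the details: the four differentiated equations, the identity $Q'+2R'=(H+z)^{2}+2(H+z)$ (using $Q+2R=H$), the integrating factors $(1-z)e^{z}$ and $(1-z)e^{-z}$, and the recovery of $P$, $Q$, $R$ and $F$ from $G$, $H$ and $R$ all come out right, and the logic is sound because each step invokes an equation that the true generating functions provably satisfy and that, together with the vanishing initial condition, has a unique formal power-series solution; no separate consistency check is needed. Your route costs more computation but explains where the formulas come from, whereas the paper's route is shorter but gives no hint of how the answer was found; your closing remark that one could instead verify the stated formulas directly against~\eqref{equation_generating_functions} is precisely the paper's proof.
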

\begin{proof}
Proposition~\ref{proposition_generating_functions}
can be viewed as a recursion for the coefficient
$\alpha_{n}(x,y)$. Hence the equation in
this proposition has a unique solution and it is
enough to verify the theorem by showing that
the proposed generating functions satisfy
equation~\eqref{equation_generating_functions}.

Finally, the generating function $F(z)$ is obtained
by adding the four generating functions $F_{\av,\av}(z)$,
$F_{\av,\bv}(z)$, $F_{\bv,\av}(z)$ and $F_{\bv,\bv}(z)$.
\end{proof}
Since $e^{-z}/(1-z)$ is the
generating function for the number of derangements,
we obtain
\begin{corollary}
For $n \geq 2$, the number of derangements on $n$ elements, $D_{n}$,
is given by $\alpha_{n}(\bv,\bv)$, that is,
$$
  D_{n}
      =
  \sum_{\pi} 2^{\bv\bv(\pi)} ,
$$
where the sum is over all permutations $\pi$ on $n$ elements
with no double ascents and starting and ending with a descent.
\label{corollary_derangements}
\end{corollary}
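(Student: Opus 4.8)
The plan is to read off the identity from the closed form for $F_{\bv,\bv}(z)$ established in Theorem~\ref{theorem_generating_functions}, together with the classical exponential generating function for the derangement numbers. Recall that $\sum_{n \geq 0} D_{n} \cdot z^{n}/n! = e^{-z}/(1-z)$, with $D_{0} = 1$ and $D_{1} = 0$; hence $e^{-z}/(1-z) - 1 = \sum_{n \geq 1} D_{n} \cdot z^{n}/n! = \sum_{n \geq 2} D_{n} \cdot z^{n}/n!$, the last step using $D_{1} = 0$.

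On the other hand, Theorem~\ref{theorem_generating_functions} gives $F_{\bv,\bv}(z) = e^{-z}/(1-z) - 1$, and by definition $F_{\bv,\bv}(z) = \sum_{n \geq 2} \alpha_{n}(\bv,\bv) \cdot z^{n}/n!$. Extracting the coefficient of $z^{n}/n!$ for each $n \geq 2$ therefore yields $\alpha_{n}(\bv,\bv) = D_{n}$.

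It then remains to recognise $\alpha_{n}(\bv,\bv)$ as the weighted sum in the statement. By definition $\alpha_{n}(\bv,\bv) = \sum \Wt(\pi)$ over those $\pi \in \SSS_{n}$ whose descent word $u(\pi)$ begins and ends with the letter $\bv$, that is, $\pi_{1} > \pi_{2}$ and $\pi_{n-1} > \pi_{n}$. Since the initial and final weight functions are identically $1$, we have $\Wt(\pi) = \prod_{i} \wt(u_{i} u_{i+1})$, a product over the consecutive pairs of letters of $u(\pi)$; and because $\wt(\av\av) = 0$, $\wt(\av\bv) = \wt(\bv\av) = 1$ and $\wt(\bv\bv) = 2$, this product vanishes unless $u(\pi)$ has no factor $\av\av$ --- that is, $\pi$ has no double ascent --- in which case it equals $2^{\bv\bv(\pi)}$, where $\bv\bv(\pi)$ is the number of double descents of $\pi$. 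This is exactly the asserted identity.

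There is no real obstacle here: the argument is a coefficient comparison of two known power series followed by an unwinding of the definition of $\Wt$. The only point needing a little care is the bookkeeping of the low-order terms --- noting that $F_{\bv,\bv}$ is, by convention, a sum over $n \geq 2$ and that $D_{1} = 0$ --- so that the two series match precisely and the identity $D_{n} = \alpha_{n}(\bv,\bv)$ holds exactly in the stated range $n \geq 2$ (with the base case $n = 2$ reading $1 = \alpha_{2}(\bv,\bv) = D_{2}$).
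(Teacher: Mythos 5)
Your proposal is correct and follows the paper's own route: the paper derives the corollary exactly by comparing $F_{\bv,\bv}(z)=e^{-z}/(1-z)-1$ from Theorem~\ref{theorem_generating_functions} with the classical derangement generating function $e^{-z}/(1-z)$. You merely make explicit the low-order bookkeeping and the unwinding of $\Wt$ that the paper leaves implicit.
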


As a corollary to Theorem~\ref{theorem_generating_functions}
we have the following recursions:
\begin{corollary}
Recursions for the sequences
$\alpha_{n}(\av,\av)$, 
$\alpha_{n}(\av,\bv)$,
$\alpha_{n}(\bv,\bv)$ and
$\alpha_{n}$
are given by,
where $n \geq 3$,
\begin{eqnarray*}
\alpha_{n}(\av,\av)
  & = &
    n \cdot \alpha_{n-1}(\av,\av)
                +
    1 + 4 \cdot (-1)^{n} , \\
\alpha_{n}(\av,\bv)
  & = &
    n \cdot \alpha_{n-1}(\av,\bv)
                -
    2 \cdot (-1)^{n} , \\
\alpha_{n}(\bv,\bv)
  & = &
    n \cdot \alpha_{n-1}(\bv,\bv)
                +
    (-1)^{n} , \\
\alpha_{n}
  & = &
    n \cdot \alpha_{n-1}
                +
    1 + (-1)^{n} .
\end{eqnarray*}
\end{corollary}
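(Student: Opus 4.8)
The plan is to read the four recursions straight off the closed forms in Theorem~\ref{theorem_generating_functions} by clearing the denominator $1-z$ and translating back from generating function to coefficients. The one identity I need is the effect of multiplying an exponential generating function by $z$: if $F(z) = \sum_{n} a_{n} z^{n}/n!$ then $z \cdot F(z) = \sum_{n} n \, a_{n-1} \, z^{n}/n!$, so an identity of the form $(1-z)\,F(z) = G(z)$ is equivalent to $a_{n} - n\,a_{n-1} = n! \cdot [z^{n}]\,G(z)$ for every $n$.

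First I would multiply each of the four generating functions by $1-z$. For example $F_{\bv,\bv}(z) = e^{-z}/(1-z) - 1$ gives $(1-z)F_{\bv,\bv}(z) = e^{-z} - 1 + z$, and in the same way $F_{\av,\bv}$, $F_{\av,\av}$ and $F$ produce entire functions $G_{x,y}(z) := (1-z) F_{x,y}(z)$ that are linear combinations of $e^{z}$, $e^{-z}$ and a polynomial of degree at most two (with no polynomial part at all in the case of $F$, where $(1-z)F(z) = e^{z} - 2 + e^{-z}$). Next I would extract $[z^{n}]\,G_{x,y}(z)$ for $n \geq 3$: the polynomial corrections of degree $\leq 2$ contribute nothing, so only the $e^{z}$ and $e^{-z}$ terms survive, contributing $1/n!$ and $(-1)^{n}/n!$ respectively. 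Multiplying through by $n!$ then converts each identity $a_{n} - n\,a_{n-1} = n!\,[z^{n}]\,G_{x,y}(z)$ into the stated recursion, the constants in front of $e^{z}$ and $e^{-z}$ being $(1,4)$ for $\alpha_{n}(\av,\av)$, $(0,-2)$ for $\alpha_{n}(\av,\bv)$, $(0,1)$ for $\alpha_{n}(\bv,\bv)$, and $(1,1)$ for $\alpha_{n}$. The recursion for $\alpha_{n}$ can also be obtained simply by adding the three recursions above together with the copy for $\alpha_{n}(\bv,\av) = \alpha_{n}(\av,\bv)$, mirroring the fact that $F$ is the sum of the four $F_{x,y}$.

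There is no real obstacle: the argument is routine coefficient extraction once Theorem~\ref{theorem_generating_functions} is in hand. The only points requiring a little care are the exponential generating function shift identity for multiplication by $z$, and keeping track of the low-degree polynomial pieces so as to confirm that the recursions hold precisely in the range $n \geq 3$. As a sanity check, since $e^{-z}/(1-z)$ is the exponential generating function of the derangement numbers, the computation for $\alpha_{n}(\bv,\bv)$ reproduces the classical recursion $D_{n} = n\,D_{n-1} + (-1)^{n}$, in agreement with Corollary~\ref{corollary_derangements}.
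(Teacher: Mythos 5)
Your proposal is correct and is exactly the intended derivation: the paper states these recursions as an immediate consequence of Theorem~\ref{theorem_generating_functions} and omits the proof, and your computation (multiplying each $F_{x,y}$ by $1-z$, using the shift identity for $z\cdot F(z)$, and noting that the residual polynomial of degree at most $2$ vanishes from the coefficient of $z^{n}$ once $n\geq 3$) fills in that omitted step correctly, with the right constants in each case.
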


Using the generating functions in
Theorem~\ref{theorem_generating_functions}
we now obtain that the error terms are
the smallest possible.
We express the result as 
explicit expressions using the nearest integer
function, which we denote by~$\nint{x}$.
\begin{theorem}
The quantities 
$\alpha_{n}(\av,\av)$, 
$\alpha_{n}(\av,\bv)$,
$\alpha_{n}(\bv,\bv)$ and
$\alpha_{n}$
are given by
the explicit expressions
$$  \begin{array}{r c c c}
\alpha_{n}(\av,\av)
  & = &
\nint{(e - 4 + 4/e) \cdot n!} 
  & \text{ for } n \geq 8, \\
\alpha_{n}(\av,\bv)
  & = &
\nint{(1 - 2/e) \cdot n!} 
  & \text{ for } n \geq 3, \\
\alpha_{n}(\bv,\bv)
  & = &
\nint{1/e \cdot n!} 
  & \text{ for } n \geq 2, \\
\alpha_{n}
  & = &
\nint{(e - 2 + 1/e) \cdot n!} 
  & \text{ for } n \geq 4. \\
\end{array} $$
\label{theorem_exact}
\end{theorem}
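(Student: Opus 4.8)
The plan is to derive each exact formula directly from the closed forms for the generating functions in Theorem~\ref{theorem_generating_functions}, extracting coefficients and then bounding the resulting tail series. The key observation is that each $F_{x,y}(z)$ is a sum of a polynomial in $z$ (which contributes nothing for $n$ large) and terms of the shape $\frac{1}{1-z} \cdot g(z)$ where $g(z)$ is one of $e^{z}$, $e^{-z}$, or a constant. Since $\frac{1}{1-z}\cdot g(z) = \sum_{n\ge 0}\left(\sum_{k=0}^{n}g_{k}\right)z^{n}$, where $g_{k}=[z^{k}]g(z)$, the coefficient $[z^{n}]\frac{e^{\pm z}}{1-z}$ equals the partial sum $\sum_{k=0}^{n}(\pm 1)^{k}/k!$, which is exactly $e^{\pm 1}$ truncated. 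Multiplying through by $n!$, I would write, for instance,
\begin{equation*}
\alpha_{n}(\bv,\bv) = n!\cdot\sum_{k=0}^{n}\frac{(-1)^{k}}{k!} = \frac{n!}{e} - n!\cdot\sum_{k=n+1}^{\infty}\frac{(-1)^{k}}{k!},
\end{equation*}
which is the classical derangement identity (consistent with Corollary~\ref{corollary_derangements}); the alternating tail has absolute value at most $n!/(n+1)! = 1/(n+1) < 1/2$ for $n\ge 2$, giving $\alpha_{n}(\bv,\bv) = \nint{n!/e}$ for $n\ge 2$.

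For the other three sequences I would carry out the same extraction. For $\alpha_{n}(\av,\bv)$, the relevant coefficient is $[z^{n}]\left(\frac{1-2e^{-z}}{1-z}\right) = 1 - 2\sum_{k=0}^{n}(-1)^{k}/k!$, so $\alpha_{n}(\av,\bv) = n! - 2n!\sum_{k=0}^{n}(-1)^{k}/k!$, and the error from $(1-2/e)n!$ is $2n!\sum_{k=n+1}^{\infty}(-1)^{k}/k!$, bounded by $2/(n+1) < 1/2$ once $n\ge 3$. For $\alpha_{n}$, the coefficient of $\frac{e^{z}-2+e^{-z}}{1-z}$ is $\sum_{k=0}^{n}\bigl(1^{k}/k! - 2\delta_{k,0} + (-1)^{k}/k!\bigr)$; multiplying by $n!$ and comparing to $(e-2+1/e)n!$ leaves a tail $n!\sum_{k=n+1}^{\infty}\bigl(1/k! + (-1)^{k}/k!\bigr)$, which is $2n!\sum_{j > (n+1)/2}1/(2j)!$ in absolute value — this is somewhat larger, but still below $1/2$ for $n\ge 4$ (and one checks $n=3$ by hand, where it fails). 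The $\alpha_{n}(\av,\av)$ case, with $g(z)=e^{z}-4+4e^{-z}$, has the largest constant $4$ in front of $e^{-z}$, so the tail bound is roughly $4/(n+1)$ plus a smaller positive contribution; this drops below $1/2$ only for $n\ge 8$, matching the stated threshold, and the smaller thresholds would be ruled out by direct computation of $\alpha_{n}(\av,\av)$ for $n=2,\ldots,7$ using the recursion in the preceding corollary.

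The main obstacle is not any single estimate but getting the tail bounds sharp enough to justify the precise thresholds ($n\ge 8$, $n\ge 3$, $n\ge 2$, $n\ge 4$) rather than merely ``$n$ sufficiently large.'' For the two sequences built purely from a single exponential ($\alpha_{n}(\bv,\bv)$ and $\alpha_{n}(\av,\bv)$) the alternating-series bound $|\text{tail}| \le c/(n+1)$ is immediate and clean. For $\alpha_{n}$ and $\alpha_{n}(\av,\av)$, where $e^{z}$ and $e^{-z}$ both appear, the $1/k!$ terms (from $e^{z}$) do not alternate, so I would split the tail as $\sum_{k>n}1/k!$ (a positive quantity bounded by $\frac{1}{(n+1)!}\cdot\frac{n+1}{n} \le \frac{2}{(n+1)!}$, hence negligible after multiplying by $n!$) plus the alternating part, and then the dominant contribution is again $c/(n+1)$ where $c\in\{1,4\}$. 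Solving $c/(n+1) < 1/2$ gives $n > 2c-1$, i.e.\ $n\ge 2$ for $c=1$ (one extra check needed) and $n\ge 8$ for $c=4$; the finitely many small cases excluded by these inequalities are then verified individually against the recursions, which completes the proof.
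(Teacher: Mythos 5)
Your approach is the same as the paper's: write each $\alpha_{n}(x,y)$ as $n!$ times a partial sum of the exponential series read off from Theorem~\ref{theorem_generating_functions}, and bound the tail by $1/2$. The treatments of $\alpha_{n}(\bv,\bv)$, $\alpha_{n}(\av,\bv)$ and $\alpha_{n}$ are correct (for $\alpha_{n}(\av,\bv)$ at $n=3$ your bound $2/(n+1)$ equals $1/2$ exactly rather than being less than it, but the strict form of the alternating-series estimate rescues the conclusion).

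There is, however, a genuine quantitative gap in the $\alpha_{n}(\av,\av)$ case. You declare the non-alternating part of the tail, $n!\sum_{k>n}1/k!$, to be ``negligible after multiplying by $n!$,'' but it is not: it is of size roughly $1/(n+1)$, the same order as the alternating part's bound $4/(n+1)$. The honest combined upper bound is therefore about $5/(n+1)$ --- precisely the bound $n!\sum_{k\ge n+1}5/k!$ that the paper uses --- and this dips below $1/2$ only at $n=10$, not at $n=8$. So your inequality does not establish the formula for $n=8$ and $n=9$: at $n=8$ your own quantities give $4/9$ plus roughly $2/9$, which exceeds $1/2$. The formula is nevertheless true for $n=8,9$ because the terms $(1+4(-1)^{k})/k!$ partially cancel in sign, but seeing this requires computing those two tails directly, which is exactly what the paper does (``the two cases $n=8,9$ can be done by hand''). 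As written, your claim that the bound ``drops below $1/2$ only for $n\ge 8$, matching the stated threshold'' is arithmetically false, and your closing remark that the finitely many excluded cases are checked against the recursion does not repair this, since you place $n=8,9$ on the wrong side of the line and hence never flag them for individual verification.
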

\begin{proof}
The third equality is classical.
We show the first equality.
The coefficient of $z^{n}/n!$ in the generating function
$F_{\av,\av}(z)$, for $n \geq 2$, is given by
$$
\alpha_{n}(\av,\av)
    = 
n! 
   \cdot
\sum_{k=0}^{n}
   \frac{1^{k} - 4 \cdot 0^{k} + 4 \cdot (-1)^{k}}{k!} . $$
Hence the difference
$$   n! \cdot (e - 4 + 4/e)
        -
     \alpha_{n}(\av,\av)
  =
n! 
   \cdot
\sum_{k \geq n+1}
   \frac{1^{k} + 4 \cdot (-1)^{k}}{k!}  , $$
is bounded in absolute value by
$$
n!
   \cdot
\sum_{k \geq n+1}
   \frac{5}{k!} 
    =
\frac{5}{n+1}
  + 
\frac{5}{(n+1) \cdot (n+2)}
  +
\cdots . $$
Note that this is a decreasing function in $n$.
For $n=10$ this function dips below $1/2$, showing
the first equality for $n \geq 10$. The two cases
$n=8,9$ can be done by hand. 
The second and fourth equalities follow by similar arguments.
\end{proof}

\section{Concluding remarks}

Are there other operators of the form~\eqref{equation_T}
which only have a finite number of non-zero eigenvalues?
Furthermore, if the associated sequences are
integer sequences would the corresponding error term
be the smallest possible,
as in Theorem~\ref{theorem_exact}?

The operators of the form~\eqref{equation_T}
have so far yielded four types of behavior:
\begin{itemize}
\item[(i)]
The operator has an infinite number of eigenvalues
and the asymptotic expansion converges.
An example of this is alternating permutations.
See~\cite[Example~1.11]{Ehrenborg_Kitaev_Perry}
and~\cite{Ehrenborg_Levin_Readdy}.
Another example is $\{123, 231, 312\}$-avoiding
permutations. See~\cite[Section~7]{Ehrenborg_Kitaev_Perry}.

\item[(ii)]
The operator has an infinite number of eigenvalues
and the asymptotic expansion does not give
an expression that converges.
This occurs with 
$123$-avoiding permutations
and $213$-avoiding permutations.
See~\cite[Sections~5 and~6]{Ehrenborg_Kitaev_Perry}.

\item[(iii)]
The operator has a finite, but positive, number of non-zero eigenvalues.
See Section~\ref{section_2_1_1_0}.
For instance, is there such an operator with exactly
two non-zero eigenvalues?
What behavior does the error term of the asymptotic expansion
have? Are there other examples with the smallest possible
error term?

\item[(iv)]
The operator has no non-zero eigenvalues.
Here the behavior can vary a lot.
Compare $\bv\av$-avoiding permutations
in Example~\ref{example_ba}
with $\{312, 321\}$-avoiding permutations
in~\cite{Claesson}.
Also see~\cite[Example~3.9]{Ehrenborg_Kitaev_Perry}.
\end{itemize}

The two equations~\eqref{equation_aaa_bbb}
and~\eqref{equation_aba_bab} in
Section~\ref{section_length_four}
have an interesting pattern in their roots.
Consider the two equations in terms of the variable
$z = 1/\lambda$. Then the roots lie on the real axis
and close to a vertical line in the complex plane.
Is there an explanation for this behavior?
Switching back to the variable $\lambda$
it says that the roots lie on the real axis
and close to a circle in the complex plane.

Baxter, Nakamura and Zeilberger~\cite{Baxter_Nakamura_Zeilberger}
have developed efficient methods to compute
the number of permutations avoiding certain patterns.
Their methods use umbral techniques
and have been implemented in Maple.
Their techniques can be extended to compute
the weighted problem introduced in this paper.

In Section~\ref{section_2_1_1_0}
we obtained that
the number of derangements $D_{n}$
is given by the sum over all permutations
with no double ascents, where each term
is $2$ to the power of the number of
double descents.
It is natural to ask for a bijective proof
of this fact. In fact, 
Muldoon Brown and Readdy~\cite{Muldoon_Readdy}
gave essentially such a bijection.

A {\em descent run} $I$
in a permutation $\pi = \pi_{1} \pi_{2} \cdots \pi_{n}$
is an interval $I = [i,j]$ such that
$\pi_{i} > \pi_{i+1} > \cdots > \pi_{j}$.
Observe that we do not require the interval to be maximal.

Given a permutation $\pi$ on $n$ elements,
write the interval $[1,n]$ as a disjoint union
$\bigcup_{r=1}^{k} I_{r}$ of
descent runs of the permutation $\pi$.
A Muldoon--Readdy barred permutation $\sigma$ from $\pi$
is obtained by the following procedure. For
each descent run $I_{r} = [i,j]$ pick an
element $h_{r}$ in the half-open interval
$(i,j]$ and bar the elements $\pi_{h_{r}}$ through
$\pi_{j}$.
Observe that for a descent run $I_{r}$ of cardinality~$c$
there are $c-1$ possibilities to pick the element $h_{r}$.
Hence to obtain Muldoon--Readdy barred permutation
each descent run needs to have cardinality at least $2$.

Finally given a permutation $\pi$, how many
Muldoon--Readdy barred permutations
can be obtained from it?
If the permutation $\pi$ has a double ascent
there is a maximal descent run of size $1$
and hence there is no way to obtain
a Muldoon--Readdy permutation.
Partition the interval $[1,n]$ into
maximal descent runs of $\pi$,
that is, $[1,n] = \bigcup_{s} J_{s}$.
Each maximal descent run $J_{s}$ can be further
partitioned into descent runs and be barred.
Let the maximal descent run $J_{s}$ have cardinality~$c$.
Then the number of ways of partitioning $J_{s}$
and barring each descent run is
$$    \sum_{(c_{1}, c_{2}, \ldots, c_{k})}
           (c_{1}-1) \cdot (c_{2}-1) \cdots (c_{k}-1)  , $$
where the sum ranges over all compositions of the integer $c$.
By the basic generating function argument
$1/(1 - x^{2}/(1-x)^{2}) = 1 + x^{2}/(1-2x)$,
the above sum is $2^{c-2}$.
Finally, note that $c-2$ is the number
of double descents in the descent run $J_{r}$.
Hence there are
$2$ to the number of double descents in $\pi$
ways to obtain a Muldoon--Readdy barred permutation from
the permutation $\pi$.

Finally,
Muldoon Brown and Readdy give
a bijection between 
derangements and 
Muldoon--Readdy barred permutations.
See Theorem~6.4 in~\cite{Muldoon_Readdy}.
This gives a bijective proof of
Corollary~\ref{corollary_derangements}.

\section*{Acknowledgments}

The authors thank
Margaret Readdy and the two referees for their comments
on an earlier draft of this paper.
The authors are partially funded by
the National Science Foundation grant DMS-0902063.
The first author also thanks the Institute for Advanced
Study and is also partially supported by 
the National Science Foundation grants
DMS-0835373 and CCF-0832797.

\newcommand{\journal}[6]{{\sc #1,} #2, {\it #3} {\bf #4} (#5), #6.}
\newcommand{\preprint}[3]{{\sc #1,} #2, preprint #3.}
\newcommand{\appear}[3]{{\sc #1,} #2, to appear in {#3.}}
\newcommand{\JCTA}{J.\ Combin.\ Theory Ser.\ A}
\newcommand{\book}[4]{{\sc #1,} ``#2,'' #3, #4.}

\bigskip

{\em R.\ Ehrenborg and J.\ Jung,
Department of Mathematics,
University of Kentucky,
Lexington, KY 40506-0027,}
\{{\tt jrge},{\tt jjung}\}{\tt@ms.uky.edu}

\end{document}